\documentclass[preprint,12pt]{article}

\usepackage{amsmath,amsthm,amssymb,latexsym}
\usepackage{doc}
\usepackage{color}
\usepackage{makeidx}
\usepackage{hyperref}
\usepackage{url}
\usepackage[dvips]{graphicx}

\oddsidemargin 0.9cm
\textwidth 14.5cm

\newtheorem{thm}{Theorem}[section]
\newtheorem{cor}[thm]{Corollary}
\newtheorem{lemma}[thm]{Lemma}

\makeindex

\begin{document}

\begin{center}
{\LARGE{Gradation in Greyscales of Graphs}} \vspace*{.5cm}

{\sc  Natalia de Castro\footnote[1]{Dpto. de Matem\'atica Aplicada I, Universidad de Sevilla, e-mail: natalia@us.es, vizuete@us.es, rafarob@us.es.}
 Mar\'{\i}a A. Garrido-Vizuete$^1$,\footnote[2]{Corresponding author. Escuela T\'ecnica Superior de Ingenier\'{\i}a de Edificaci\'{o}n, Avda. Reina Mercedes, n. 4A, 41012 Sevilla, SPAIN.}, Rafael Robles$^1$,
  Mar\'{\i}a Trinidad Villar-Li\~n\'an\footnote[3]{Dpto. de Geometr\'{\i}a y Topolog\'{\i}a, Facultad de Matem\'aticas,
Universidad de Sevilla, vi\-llar@us.es.} \\
}

\end{center}

\begin{abstract}
In this work we present the notion of greyscale of a graph as a colouring of its vertices that uses colours from the real interval [0,1]. Any greyscale induces another colouring by assigning to each edge the non-negative difference between the colours of its vertices. These edge colours are ordered in lexicographical decreasing ordering and give rise to a new element of the graph: the gradation vector. We introduce the notion of minimum gradation vector as a new invariant for the graph and give polynomial algorithms to obtain it. These algorithms also output all greyscales that produce the minimum gradation vector.
This way we tackle and solve a novel vectorial optimization problem in graphs that may produce more satisfactory solutions than those ones generated by known scalar optimization approaches.
The interest of these new concepts lies in their possible applications for solving problems of engineering, physics and applied mathematics which are modeled according to a network whose nodes have assigned numerical values of a certain parameter delimited by a range of real numbers. The objective is to minimize the differences between each node and its neighbors, ensuring that the extreme values of the interval are assigned.
\end{abstract}

\vspace*{0.4cm}
{\bf Keywords:} graph colouring, greyscale, minimum gradation, graph algorithms.

\vspace*{0.4cm}
\textbf{MSC 2010 (Primary):} 05C, 68R.    \textbf{MSC 2010 (Secondary):} 05C15, 05C85,  90C47. 

\section{Introduction}

Graph colouring problems are among the most important combinatorial
optimization  problems in graph theory because of their wide
applicability in areas such as wiring of printed
circuits~\cite{clw-odmwi-92}, resource
allocation~\cite{wsn-radpbnugca-91}, frequency assignment
problem~\cite{ahkms-mp-07,gk-glvws-09,omgh-bsgc-16}, a wide
variety of scheduling problems~\cite{m-gcpas-04} or computer
register allocation~\cite{cacchm-rac-81}.

A variety of combinatorial optimization problems on graphs can be
formulated  similarly in the following way. Given a graph $G(V,E)$ and a mapping $f:V\longrightarrow \mathbb{Z}$, a new mapping  $\widehat f:E\longrightarrow
\mathbb{Z}$ is induced by $f$ such that $\widehat f(e)=|f(u)-f(v)|$ for every $e=\{u,v\}\in
E$. Then an optimization problem is formulated from several key
elements: mappings $f$ belonging to a mapping subset $S$, the
image of $V$ by $f$ and the image of $E$ by $\widehat f$. In
particular, the classic graph colouring problem, that is, colouring
the vertices of $G$ with as few colours as possible so that adjacent
vertices always have different colours, can be stated in these terms
as follows:
$$\chi(G)=\min_{f\in S} |f(V)|\ \ \text{where}\ \ S=\{ f:V\rightarrow \mathbb{Z} \text{ such that } 0\notin \widehat f(E)\}. $$
It is well known that this minimum number $\chi(G)$ is called the
chromatic  number of the graph $G$ and that its computing is an
NP-hard problem~\cite{k-rcp-72}.

It must be emphasized that the classic graph colouring problem bears in mind
the number of colours used but not what they are.
However, there are some works related to map colouring for which the
nature of the colours is essential, whereas the number of them is
fixed. The {\it maximum differential graph colouring
problem}~\cite{hkv-mdgc-11}, or equivalently the {\it antibandwidth
problem}~\cite{lvw-svbmp-84}, colours the vertices of the graph in
order to maximize the smallest colour difference between adjacent
vertices and using all the colours $1,2,\ldots,|V|$. Under the above
formulation, these problems are posed as follows:
$$\max_{f\in S}\min \widehat f(E)\ \ \text{ for } S=\{ f:V\rightarrow \mathbb{Z} \text{ such that }  f(V)=\{1,2,\dots,|V|\}\},$$
and therefore the complementary optimization case, the {\it bandwidth problem}, is given by
$$\min_{f\in S}\max \widehat f(E)\ \ \text{ for } S=\{ f:V\rightarrow \mathbb{Z} \text{ such that }  f(V)=\{1,2,\dots,|V|\}\}.$$
Note that these problems are concerned with mappings  that take values within a discrete
spectrum and with the optimization of a
scalar function. Dillencourt et al.~\cite{deg-ccggcse-07} studied a
variation of the differential graph colouring problem under the
assumption that all colours in the colour spectrum are available.
This makes the problem continuous rather than discrete. The well-known frequency assignment problem is continuous by nature, although its treatment has traditionally been discrete. However, recent works~\cite{llsh-fn-13} propose to shift the paradigm from discrete channel allocation to continuous frequency allocation. On the other hand, the key issue for process scheduling problems concerns the time representation, and in order to address real limitations, methods based on continuous-time representations have attracted a great amount of attention and provide great potential for the development of more accurate and efficient modeling and solution approaches~\cite{fl-cvdascp-04, w-sf-14}.

In this line, this paper deals with mappings taking values within the continuous spectrum $[0,1]$, where $0$ and $1$ correspond to white and black colours, respectively, and the rest of the intermediate values are grey tones. Formally, given a graph $G(V,E)$, a \textit{greyscale} $f$ of $G$ is a mapping $f:V\longrightarrow [0,1]$ such that the white and black colours are reached by $f$. Every greyscale induces a mapping  $\widehat f$ on $E$ by assigning to each edge the non-negative difference between the values of $f$ on its vertices. Whether the values of $\widehat f$ are sorted by decreasing order, the \textit{gradation vector} $grad(G,f)$ is obtained and the optimization problem of finding the minimum one, following the lexicographical order, among all the gradation vectors of greyscales of $G$ arises in a natural way. Analogously, the notion of contrast is associated to increasing order and the maximum vector, and it is widely studied in work~\cite{cgrv-cgg-17} by the same authors of this paper.

The bandwidth and antibandwidth problems are interested in optimizing the extreme colours of the edges of the graph, whereas Dillencourt et al.~\cite{deg-ccggcse-07} focus on maximizing the sum of the colours of all the edges. Under this last approach, other papers work with different sum functions (for instance, see~\cite{lll-mscp-17}). Nonetheless, both cases, extreme values and sum functions, deal with scalar objective functions. The notion of gradation vector leads us to a vectorial objective function which allocates grey tones in a manner which is both local and global: local due to the fact that the colour of every particular edge belongs to the gradation vector, and global because all edges of the graph participate in the vectorial objective function. Figure~\ref{fig:vecgradation} visually displays an example of the goodness of minimum gradation vectors versus scalar optimization. Every vertex has been associated to a \textit{big pixel} which is coloured with its grey tone and these \textit{big pixels} are next to each other according to the adjacencies between vertices (in this construction, the notion of dual graph is underlying but without considering the external face). The idea of gradation is clearly better illustrated in Figure~\ref{fig:vecgradation}(a).

\begin{figure}[htb]
 \begin{center}
\includegraphics[width=9cm]{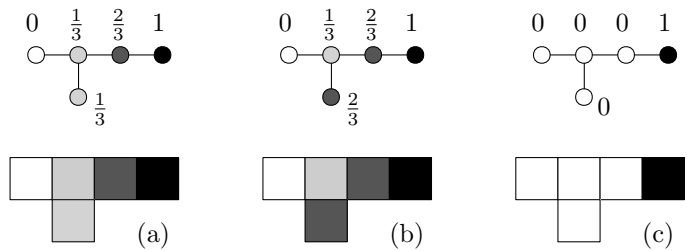}
\caption{Comparison of minimum greyscales according to different criteria about the colours of edges: (a) minimum gradation vector, (b) minimax and (c) minisum.}
\label{fig:vecgradation}
\end{center}
\end{figure}

Thus, under the formulation given above and following the lexicographical order, the minimum gradation problem on graphs is stated in the following terms:
$$\min_{f\in S}grad(G,f)\ \ \text{ for } S=\{ f:V\rightarrow [0,1] \text{ such that }  \{0,1\}\subset f(V)\}.$$

At present, although this problem seems to be a natural colouring question, we have not found the minimum gradation problem studied in these terms in the literature.

Even though it is not the main goal of this paper to  deeply focuss on possible applications of gradation in graphs,  we think that the minimum gradation greyscale could contribute to  interesting progresses on several  problems concerning real networks. Next we present two of them.

For a wide variety of systems in different areas such as biological, social, technological, and information networks the community detection problem has become extremely useful. This problem consists of identifying special groups of vertices in a graph with high concentrations of edges within such vertices and low concentrations between these groups.
This feature of real networks is called community structure \cite{Girvan}, or clustering in graphs.  For an extensive report on this topic see \cite{Santo Fortunato}. The minimum gradation  greyscale solution  could be interpreted as a new similarity measure quantifying some type of affinity between node pairs. This mapping locally minimizes the differences between each vertex and its neighbours and takes into account the global  distances in the network.  Besides, extremal values would be assigned to antipodal vertices.

It is well known that graph theory  is used to modelize many kinds of  networks services.
The minimum gradation  greyscale solution would model a possible almost uniform distribution of a service through a network, from the sources to the sinks. For  water  supply networks, problems such as  minimizing the amount of dissipated power in the water network and establishing pressure control techniques, among others, are  studied. In \cite{DiNardo2013} the method of graph partitioning is proposed to  solve them. We  guess that  for a given water network graph model, the minimum gradation  greyscale solution  would help and complement the design of a good distribution of the water through the network in such a manner that  the water or pressure losses  between contiguous pipes would be minimized  at the same time. A source or sink vertex can be modeliced with extreme values prefixed in the greyscale. Thus, some of the vertices have preassigned grey tones and the aim is to obtain the minimum gradation vector preserving these fixed grey tones.

The outline of the paper is as follows: Section~\ref{sec:prelimi} formally introduces the notion of gradation on
graphs through concepts such as greyscale and gradation vector, and formulates the two problems for study: \textit{minimum gradation} and  \textit{restricted minimum gradation on graphs}. In Section~\ref{sec:gradation}, several results about the nature of the gradation problems are first established, and then the polynomial nature of both problems is proved by designing algorithms that provide minimum gradation vectors and all greyscales that give rise to them. Finally, in Section~\ref{sec:openp} we conclude with some remarks and highlight some open problems. 

\section{Preliminaries}
\label{sec:prelimi}

This section is devoted to establishing the necessary definitions about
gradation on graphs  and to formulating the problems to be studied in this
paper. Since the gradation and contrast notions together arise in
greyscales of graphs, the basic concepts about contrast are also
presented. Given a graph\footnote{Graphs in this paper are finite,
undirected and simple and are denoted by $G(V,E)$, where $V$ and $E$
are its vertex-set and edge-set, respectively. The number of
elements of $V$ and $E$ are denoted by $n$ and $m$, respectively.
For further terminology we follow~\cite{h-gt-90}.} $G(V,E)$, a
\textit{greyscale} $f$ of $G$ is a mapping on $V$ to the interval
$[0,1]$ such that $f^{-1}(0)\neq \emptyset$ and $f^{-1}(1)\neq
\emptyset$. For each vertex $v$ of $G$, we call $f(v)$ the
\textit{grey tone} of $v$, or more generally, the \textit{colour} of
$v$, and notice that two adjacent vertices can have mapped the same
grey tone. In particular, values $0$ and $1$ are called
\textit{the extreme tones}, that is, \textit{white and black
colours}, respectively. In a natural way, the notion of \textit{complementary greyscale}
arises  for each greyscale $f$ such that it maps every vertex $v$ of
$G$ to $1-f(v)$.

Associated to each greyscale $f$ of the graph $G(V,E)$, the mapping
$\widehat{f}: E \rightarrow [0,1]$ is defined as
$\widehat{f}(e)=|f(u)-f(v)|$ for every $e=\{u,v\}\in E$ and
represents the gap or increase between the grey tones of vertices $u$ and $v$. The value $\widehat{f}(e)$ is also said to be
the \textit{grey tone} of edge $e$. Thus, we deal with \textit{coloured
vertices and edges} by $f$ and  $\widehat{f}$, respectively. Note
that the same mapping $\widehat{f}$ associated to
the greyscale $f$ and its complementary one is obtained.

The \textit{gradation vector} and the \textit{contrast vector}
associated  to the greyscale $f$ of $G$ are vectors $grad(G,f) =
(\widehat{f}(e_m), \widehat{f}(e_{m-1}) , \ldots , \widehat{f}(e_1)
)$ and $cont(G,f) =(\widehat{f}(e_1), \widehat{f}(e_2), \ldots,
\widehat{f}(e_m))$, respectively, where the edges of $G$ are indexed
such that $\widehat{f}(e_i) \leq \widehat{f}(e_j)$ whether $i < j$,
that is, in ascending order of their grey tones. Thus, it can be noticed that
the components of any contrast vector are ordered in ascending order
and those of any gradation vector in decreasing order. For the sake
of clarity and when the graph is fixed, the gradation and contrast
vectors associated to a greyscale $f$ will be denoted by
${\mathcal{G}}_f$  and ${\mathcal{C}}_f$, respectively. Figure~\ref{fig:greyscaleK4} shows two greyscales of the graph $K_4$, $f$ and $f'$,  whose corresponding  gradation vectors are  ${\mathcal{G}}_f=(1, \frac{1}{2}, \frac{1}{2},  \frac{1}{2}, \frac{1}{2},0)$ and ${\mathcal{G}}_{f'}=(1,\frac{2}{3}, \frac{2}{3},\frac{1}{3}, \frac{1}{3}, \frac{1}{3} )$, respectively.

Given two greyscales $f$ and $f'$ of a graph $G$, we say that $f$
has  \textit{better gradation} than $f'$ if the gradation vector
${\mathcal{G}}_f$ is smaller than ${\mathcal{G}}_{f'}$ following the
lexicographical order, that is, ${\mathcal{G}}_f <
{\mathcal{G}}_{f'}$. Thus, the descending order of gradation vectors
determines the goodness in terms of gradation. Then, $f$ is said to
be \textit{smaller or greater by gradation} than $f'$ if
${\mathcal{G}}_f < {\mathcal{G}}_{f'}$ or ${\mathcal{G}}_f >
{\mathcal{G}}_{f'}$, respectively. In a similar way, we say that
$f$ has \textit{better contrast} than $f'$ if the contrast vector
${\mathcal{C}}_f$ is greater than ${\mathcal{C}}_{f'}$ following the
lexicographical order, that is, ${\mathcal{C}}_f >
{\mathcal{C}}_{f'}$. Thus, the ascending order of contrast vectors
determines the goodness in terms of contrast. Then, $f$ is said to
be \textit{smaller or greater by contrast} than $f'$ if
${\mathcal{C}}_f < {\mathcal{C}}_{f'}$ or ${\mathcal{C}}_f >
{\mathcal{C}}_{f'}$, respectively.  The greyscale $f$ of
Figure~\ref{fig:greyscaleK4} is greater than $f'$ by both contrast
and gradation and so $f$ has better contrast than $f'$ but $f'$ has
better gradation than $f$.

\begin{figure}
\begin{center}
\includegraphics[width=7cm]{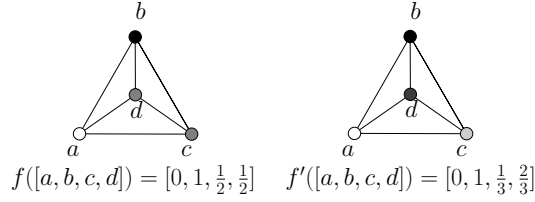}
\caption{Two greyscales $f$ and $f'$ of the graph $K_4$.}
\label{fig:greyscaleK4}
\end{center}
\end{figure}

Thus, the problems of finding the minimum gradation vector and the
maximum contrast vector  naturally arise in the contexts of
gradation and contrast on graphs. From now on, we focus on gradation
problem and point out \cite{cgrv-cgg-17} for the contrast problem.

Then, a greyscale of a graph $G$ whose gradation vector is minimum
is called a  \textit{minimum gradation greyscale} of $G$ and the
following problem is formulated:

\textbf{Minimum gradation on graphs} (\textsc{migg}): given a connected  graph $G(V,E)$, finding the minimum gradation vector and all the minimum gradation greyscales.

$$\min_{f\in S}grad(G,f)\ \ \text{ for } S=\{ f:V\rightarrow [0,1] \text{ such that }  \{0,1\}\subset f(V)\}.$$

We deal with the restricted version of this problem when the grey
tones of some  vertices are a priori known and the aim is to obtain
the minimum gradation vector preserving the fixed grey tones. This
situation leads to the concept of incomplete greyscale. Given a
graph $G(V,E)$ and a nonempty proper subset $V_c$ of $V$, \textit{an
incomplete $V_c$-greyscale} of $G$ is a mapping on $V_c$ to the
interval $[0,1]$. Note that this incompleteness means both that the
mapping is not defined on all the vertices of $G$ and
that the extreme tones do not necessarily belong to the range of the
incomplete mapping. A greyscale $f$ is \textit{compatible} with an
incomplete $V_c$-greyscale $g$ if $f(u)=g(u)$ for all $u \in V_c$.

\textbf{Restricted minimum gradation on graphs} (\textsc{rmigg}):
given a  connected graph $G(V,E)$ and an incomplete $V_c$-greyscale
$g$ of $G$, finding the gradation vector that is minimum among all
the gradation vectors of greyscales compatible with $g$, as well as
determining all these greyscales.

$$\min_{f\in S}grad(G,f)\ \ \text{ for } S=S_1 \cap S_2$$
$$\text{where } S_1=\{ f:V\rightarrow [0,1] \ / \  \{0,1\}\subset f(V) \} \text{ and }$$
$$S_2=\{f:V\rightarrow [0,1] \ / \  f \text{ is compatible with an incomplete greyscale given of $G$}\}.$$

Resolving each of these problems means finding the appropriate
minimum gradation  vector and all their minimum gradation greyscales
except the complementary ones. Note that the \textsc{migg} and
\textsc{rmigg} problems are posed for connected graphs but general
graphs can be also considered, and in this case each connected component has to be considered separately. 

\section{Minimum gradation problem}
\label{sec:gradation}

In this section several results about the nature of the gradation problems are first established, which let us
prove the correctness of our polynomial procedures. These algorithms provide minimum gradation vectors and all greyscales that give rise to them, all minimum gradation greyscales. Some cases are distinguished in the \textsc{rmigg} problem due to the special role that the extreme tones $0$ and $1$ play in the concept of greyscale and, from a common subroutine, several polynomial algorithms are designed according to the possible existence of $0$'s or $1$'s as prefixed colours. Finally, the \textsc{migg} problem is also solved in polynomial time.

Before giving our results, we make some interesting observations about the \textsc{migg} problem. Given a greyscale of a connected graph, the components of the gradation vector are sorted by decreasing order and then, our purpose is to obtain the minimum gradation vector. Therefore, our question can be considered as a minimax problem. By other hand, throughout this section we deal with gradation vectors and the greyscales which are associated to. Notice that, given a graph, the minimum gradation vector is unique but there can exist different minimum gradation greyscales which give rise to the same minimum gradation vector (see an example in Figure~\ref{fig:numgrayscale}).

\begin{figure}[htb]
 \begin{center}
\includegraphics[width=9cm]{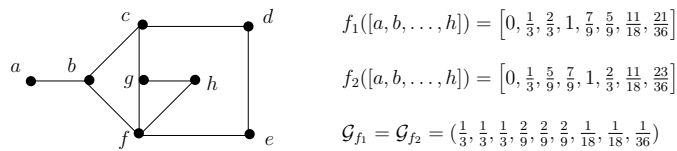}
\caption{The minimum gradation vector can be achieved from different greyscales.}
\label{fig:numgrayscale}
\end{center}
\end{figure}

About paths and distance, we follow the terminology of~\cite{h-gt-90}. Given a connected graph $G$, the \textit{distance} $d(u,v)$ between two vertices $u$ and $v$ in $G$ is the length of a shortest path joining them; a \textit{$u-v$ path} is a path joining the $u$ and $v$ vertices of $G$ and a \textit{$u-v$ geodesic} is a shortest $u-v$ path. The \textit{diameter} $d(G)$ is the length of any longest geodesic, which is called a \textit{diameter geodesic}, and two vertices $u$ and $v$ are \textit{antipodal} if $d(u,v)=d(G)$.

The two following definitions, support greyscale and edge-colour-increase mapping, provide the key tools to solve the \textsc{migg} problem for connected graphs.

Let $G=(V,E)$ be a connected graph of diameter $d(G)$ and let $u$ and $v$ be two antipodal vertices of $G$. Then, the {\em support greyscale} for $u$ and $v$ is the mapping on $V(G)$ given by
$$f\langle u,v \rangle (w)= \frac{d(w,u)-d(w,v)+d(G)}{2d(G)}.$$

It is straightforward to check that $f \langle u,v \rangle$ is a greyscale of $G$, that is, it takes values from the interval $[0,1]$ and, in particular, $f \langle u,v \rangle(u)=0$ and $f \langle u,v \rangle (v)=1$. The following lemma establishes the values of $\widehat{f} \langle u,v \rangle$.

\begin{lemma}
\label{lemma:sggorro}
Let $f\langle u,v \rangle$ be the support greyscale associated to the pair of antipodal vertices $u$ and $v$ of a connected graph $G(V,E)$ of diameter $d(G)$. Then, the only components of the gradation vector ${\mathcal{G}}_{f \langle u,v \rangle }$ are $\frac{1}{d(G)}$, $\frac{1}{2d(G)}$ and $0$.
\end{lemma}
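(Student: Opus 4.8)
The plan is to compute $\widehat{f}\langle u,v\rangle(e)$ directly for an arbitrary edge $e=\{a,b\}$ and check that it can only equal one of the three claimed values. First I would substitute the definition of the support greyscale into $\widehat{f}\langle u,v\rangle(e)=|f\langle u,v\rangle(a)-f\langle u,v\rangle(b)|$; the two copies of $d(G)$ in the numerators cancel, and after grouping the terms by the antipodal vertex I expect to obtain
$$\widehat{f}\langle u,v\rangle(e)=\frac{1}{2d(G)}\left|\bigl(d(a,u)-d(b,u)\bigr)-\bigl(d(a,v)-d(b,v)\bigr)\right|.$$
Everything then reduces to bounding the integer quantity inside the absolute value.

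The key step is the elementary distance property for adjacent vertices: since $a$ and $b$ span an edge, $d(a,b)=1$, and the triangle inequality yields $|d(a,w)-d(b,w)|\le d(a,b)=1$ for every vertex $w$. As graph distances are non-negative integers, this forces each of $d(a,u)-d(b,u)$ and $d(a,v)-d(b,v)$ to lie in $\{-1,0,1\}$. Writing $\alpha=d(a,u)-d(b,u)$ and $\beta=d(a,v)-d(b,v)$, the difference $\alpha-\beta$ then ranges over $\{-2,-1,0,1,2\}$, so $|\alpha-\beta|\in\{0,1,2\}$.

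Substituting back, I would conclude that $\widehat{f}\langle u,v\rangle(e)$ equals $0$, $\frac{1}{2d(G)}$ or $\frac{1}{d(G)}$ exactly when $|\alpha-\beta|$ is $0$, $1$ or $2$, respectively. Since $e$ was arbitrary, every coordinate of the gradation vector ${\mathcal{G}}_{f\langle u,v\rangle}$ is one of these three numbers, which is precisely the assertion of the lemma.

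I do not anticipate a genuine obstacle here: the argument is essentially a one-line computation followed by the triangle-inequality bound. The only point worth flagging is that the antipodality of $u$ and $v$ plays no role in this particular estimate — it is needed only to guarantee, as already observed before the statement, that $f\langle u,v\rangle$ is a bona fide greyscale attaining the extreme tones $0$ and $1$, whereas the three-value conclusion for $\widehat{f}\langle u,v\rangle$ holds for any choice of $u$ and $v$.
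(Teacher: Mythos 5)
Your proposal is correct and follows essentially the same route as the paper: the paper also reduces everything to the observation that $d(w_2,u)=d(w_1,u)+k$ with $k\in\{-1,0,1\}$ (and likewise for $v$), then enumerates the resulting nine cases, which is exactly your pair $(\alpha,\beta)\in\{-1,0,1\}^2$ written out case by case. Your single triangle-inequality bound $|\alpha-\beta|\in\{0,1,2\}$ is just a more compact packaging of that same case analysis, and your closing remark that antipodality is irrelevant to this estimate is accurate.
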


\begin{proof}
Let $e=\{w_1,w_2\}$ be an edge of $G$. Due to the adjacency between $w_1$ and $w_2$, it holds that $d(w_2,u)=d(w_1,u)+k$, with $k \in \{-1,0,1\}$. These three values of $k$ give rise to three cases to analyze associated to $u$ (index $i$) and, analogously, three cases for $v$ (index $j$). That is, from the definition of the support greyscale and distinguishing these nine Cases $i.j$, with $i,j=1,2,3$, the following values for $|f\langle u,v\rangle(w_1)-f\langle u,v\rangle(w_2)|$ are reached:
\begin{itemize}
\item Cases 1.1, 2.2 and 3.3: $|f\langle u,v \rangle (w_1)-f \langle u,v\rangle (w_2)|=0$.

\end{itemize}
\end{proof}

In particular, the values of $\widehat{f} \langle u,v \rangle (e)$ on the edges of any $u-v$ geodesic are $\frac{1}{d(G)}$ because of the diameter notion, that is, the longest geodesic in $G$. So, at least, the first $d(G)$ components of ${\mathcal{G}}_{f \langle u,v \rangle }$ are equal to $\frac{1}{d(G)}$.

However, let us remark that support greyscales do not lead to minimum gradation vectors in general. The support greyscale for $u$ and $v$ of the graph in Figure~\ref{fig:supportnonmin} is $f \langle u,v \rangle([ u,a,b\ldots h,i,v ])=[0,\frac{1}{4},\frac{1}{4},\frac{1}{4},\frac{1}{4},\frac{1}{2},\frac{1}{2},\frac{1}{2},\frac{3}{4},1]$, and so its associated gradation vector is given by ${\mathcal{G}}_{f \langle u,v \rangle}=( \frac{1}{4}, \frac{1}{4}, \frac{1}{4}, \frac{1}{4}, \frac{1}{4}, 0,0,0,0,0 )$. Nevertheless, the minimum gradation vector of this graph of diameter $4$ is $(\frac{1}{4}, \frac{1}{4}, \frac{1}{4}, \frac{1}{4}, \frac{1}{16}, \frac{1}{16},\frac{1}{16}, \frac{1}{16}, 0 ,0)$, that can be obtained from the greyscale $$f( [ u,a,b\ldots h,i,v ])= [0,\frac{1}{4},\frac{5}{16},\frac{6}{16},\frac{7}{16},\frac{1}{2},\frac{1}{2},\frac{1}{3},\frac{3}{4},1].$$

\begin{figure}[htb]
 \begin{center}
\includegraphics[width=4cm]{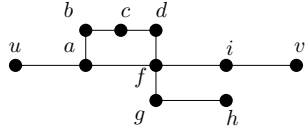}
\caption{The support greyscale for $u$ and $v$ of this graph of diameter $4$ is not a minimum gradation greyscale.}
\label{fig:supportnonmin}
\end{center}
\end{figure}
Then, given two antipodal vertices, a special greyscale has been defined which maps them to the values $0$ and $1$. In some sense, Corollary~\ref{cor:antizo} will state the reciprocal of this fact.

On the other hand, given a greyscale $f$ of a connected graph $G(V,E)$, the \textit{edge-colour-increase mapping} $F$ is defined as the mapping $F: V \times V \longrightarrow [0,1]$ such that $$F(u,v)=\left \{ \begin{tabular}{ccc} $\frac{|f(u)-f(v)|}{d(u,v)}$ & & if $u \neq v$, \\
 $0$ & & if $u=v$. \end{tabular} \right.$$
The value of $F(u,v)$ can be viewed as `the amount of colour' that every edge of any $u-v$ geodesic would be given whether the colour increase between $u$ and $v$ were fairly distributed along the $u-v$ geodesic. Throughout this section, the relationship between $\hat{f}$ and $F(u,v)$ is studied.

Geodesics play an essential role in the \textsc{migg} problem. Our next results are established on the set of geodesics of the given graph and they state several links between the values of $f$, $\hat{f}$ and $F$, according to the position of the vertices into the geodesics.

\begin{lemma}
\label{lemma:tech}
Let $f$ be a greyscale of a graph $G(V,E)$ and let $u$ and $v$ be a pair of vertices of $G$. For each vertex $w$ of each $u-v$ geodesic and different to $u$ and $v$, it holds that, $$F(u,v) < \mbox{max}\{F(u,w),F(w,v) \} \  \ \mbox{or}  \ \ F(u,v)=F(u,w)=F(w,v).$$

Moreover, the above equalities only hold whenever $f(w)$ belongs to the interval of extremes $f(u)$ and $f(v)$.
\end{lemma}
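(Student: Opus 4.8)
The plan is to work directly from the definition of the edge-colour-increase mapping. Set $a=f(u)$, $b=f(v)$, and $c=f(w)$, and recall that since $w$ lies on a $u$--$v$ geodesic we have the additive distance relation $d(u,v)=d(u,w)+d(w,v)$. Write $p=d(u,w)$ and $q=d(w,v)$, so that $d(u,v)=p+q$. Then by definition
\begin{equation*}
F(u,v)=\frac{|a-b|}{p+q},\qquad F(u,w)=\frac{|a-c|}{p},\qquad F(w,v)=\frac{|c-b|}{q}.
\end{equation*}
The whole statement is therefore a comparison between the ``global slope'' $\frac{|a-b|}{p+q}$ and the two ``local slopes'' $\frac{|a-c|}{p}$ and $\frac{|c-b|}{q}$, and the geometric content is a mediant-type inequality.

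First I would handle the case where $c$ lies in the closed interval with endpoints $a$ and $b$. In that case $|a-c|+|c-b|=|a-b|$, so if we put $x=|a-c|$ and $y=|c-b|$ we have $F(u,v)=\frac{x+y}{p+q}$ while $F(u,w)=\frac{x}{p}$ and $F(w,v)=\frac{y}{q}$. The mediant inequality for nonnegative numbers states that $\frac{x+y}{p+q}$ lies between $\frac{x}{p}$ and $\frac{y}{q}$, with equality throughout precisely when $\frac{x}{p}=\frac{y}{q}$. Since the global slope is at most the larger of the two local slopes, we obtain $F(u,v)\le\max\{F(u,w),F(w,v)\}$, and the case of equality forces $F(u,v)=F(u,w)=F(w,v)$. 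This is exactly the dichotomy in the statement, and it simultaneously shows that the equality branch can occur in this case.

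Next I would dispose of the case where $c$ does \emph{not} lie between $a$ and $b$. Then one of the quantities $|a-c|$, $|c-b|$ strictly exceeds $|a-b|$; concretely, $\max\{|a-c|,|c-b|\}>|a-b|$, since moving outside the interval only increases the distance to the farther endpoint. Because $p,q\le p+q$, dividing by the smaller denominators only enlarges the ratios, so
\begin{equation*}
\max\{F(u,w),F(w,v)\}=\max\Bigl\{\tfrac{|a-c|}{p},\tfrac{|c-b|}{q}\Bigr\}\ge\frac{\max\{|a-c|,|c-b|\}}{p+q}>\frac{|a-b|}{p+q}=F(u,v),
\end{equation*}
which gives the strict inequality and, in particular, rules out the equality branch. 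Combining the two cases yields the claimed dichotomy together with the final sentence that the equalities hold only when $f(w)$ lies in the interval of extremes $f(u)$ and $f(v)$.

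The main obstacle is bookkeeping rather than depth: one must be careful with the absolute values when $c$ is outside $[a,b]$ (or $[b,a]$), and make sure the degenerate subcases $a=b$, or $p$ or $q$ equal to the full distance, are covered without dividing by zero. Handling these cleanly is most easily done by reducing everything to the mediant inequality $\min\{\frac{x}{p},\frac{y}{q}\}\le\frac{x+y}{p+q}\le\max\{\frac{x}{p},\frac{y}{q}\}$ for $x,y\ge0$ and $p,q>0$, which I would isolate as the single computational fact driving both cases.
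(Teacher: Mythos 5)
Your proof is correct and follows essentially the same route as the paper's: both split on whether $f(w)$ lies in the closed interval with endpoints $f(u)$ and $f(v)$, and the mediant inequality you invoke for the inside case is proved by exactly the summation argument the paper carries out by contradiction, while the outside case is the paper's first two cases in a single computation. Packaging the key step as the mediant inequality $\min\{\tfrac{x}{p},\tfrac{y}{q}\}\le\tfrac{x+y}{p+q}\le\max\{\tfrac{x}{p},\tfrac{y}{q}\}$ (with equality only when $\tfrac{x}{p}=\tfrac{y}{q}$) is a clean reorganization, but the underlying argument is the same.
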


\begin{proof}
The relative position of $f(w)$ with respect to $f(u)$ and $f(v)$ determines three cases. For the sake of clarity and without loss of generality we may suppose that $f(u) \leq f(v)$. On the other hand, it is clear that $d(u,v)=d(u,w)+d(w,v)$, and since $w$ is different to $u$ and $v$, then $d(u,w) < d(u,v)$ and $d(w,v) < d(u,v)$.

\begin{enumerate}
\item If $f(w) \leq f(u) \leq f(v)$, then $$\frac{f(v)-f(u)}{d(u,v)} \leq \frac{f(v)-f(w)}{d(u,v)} < \frac{f(v)-f(w)}{d(w,v)}.$$ That is, $F(u,v) < F(w,v)$ and $F(u,v) < \mbox{max}\{F(u,w),F(w,v) \}$ trivially follows.
\item If $f(u) \leq f(v) \leq f(w)$, it proceeds as in the previous case obtaining the same inequality.
\item If $f(u) \leq f(w) \leq f(v)$, we prove that it is not possible $F(u,v) > \mbox{max}\{F(u,w),F(w,v) \}$ and if $F(u,v) = \mbox{max}\{F(u,w),F(w,v) \}$, then the three values of $F$ are equal.

    Assume to the contrary that $F(u,v) > \mbox{max}\{F(u,w),F(w,v) \}$. Hence,

    \begin{equation}
    \label{eq1}
    F(u,w) < F(u,v) \Rightarrow f(w)-f(u) < d(u,w)F(u,v)
    \end{equation}

    and
    \begin{equation}
    \label{eq2}
    F(w,v) < F(u,v) \Rightarrow f(v)-f(w) < d(v,w) F(u,v)
    \end{equation}
    Adding (\ref{eq1}) and (\ref{eq2}), we obtain
    $$f(v)-f(u) < (d(u,w)+d(w,v))F(u,v) \Rightarrow f(v)-f(u) < d(u,v)F(u,v)$$
    $$\Rightarrow F(u,v) < F(u,v),$$
    which is a contradiction.

    Next, if $F(u,v) = \mbox{max}\{F(u,w),F(w,v) \}$ we assume $F(u,v)=F(u,w)$.
    \begin{equation}
    \label{eq3}
    F(u,v)=F(u,w) \Rightarrow f(w)-f(u)=(f(v)-f(u))\frac{d(u,w)}{d(u,v)}
    \end{equation}
    Now,
    $$F(w,v)=\frac{(f(v)-f(u))-(f(w)-f(u))}{d(w,v)}$$
    and by (\ref{eq3}) and since $d(u,v)-d(u,w)=d(w,v)$,
    $$F(w,v)=\frac{(f(v)-f(u))d(w,v)}{d(u,v)d(w,v)}=F(u,v).$$
    So, $F(u,v)=F(u,w)=F(w,v)$.

    Under the assumption that $F(u,v)=F(w,v)$
    and by similar arguments, the same result is obtained.
\end{enumerate}
\end{proof}

The following two results establish connections between the mappings $\widehat{f}$ and $F$ on geodesics.

\begin{cor}
\label{cor:lowerb}
Let $f$ be a greyscale of a graph $G(V,E)$ and let $u$ and $v$ be a pair of vertices of $G$. For each $u-v$ geodesic $P_{u-v}$, it holds that $$\displaystyle F(u,v) \leq \max_{e \in P_{u-v}} \widehat{f}(e).$$
\end{cor}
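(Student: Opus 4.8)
The plan is to deduce the corollary from Lemma~\ref{lemma:tech} by induction on the length $\ell = d(u,v)$ of the geodesic $P_{u-v}$. First I would dispose of the base case $\ell = 1$, in which $P_{u-v}$ consists of a single edge $e=\{u,v\}$; here $F(u,v)=|f(u)-f(v)|=\widehat{f}(e)=\max_{e\in P_{u-v}}\widehat{f}(e)$, so the asserted inequality holds with equality.

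For the inductive step ($\ell\geq 2$) I would fix any internal vertex $w$ of $P_{u-v}$ distinct from $u$ and $v$. Since every subpath of a geodesic is itself a geodesic, the two portions $P_{u-w}$ and $P_{w-v}$ of $P_{u-v}$ are a $u$-$w$ geodesic and a $w$-$v$ geodesic, respectively, each of length strictly less than $\ell$. Lemma~\ref{lemma:tech} then supplies $F(u,v)\leq\max\{F(u,w),F(w,v)\}$, since both alternatives offered by that lemma — the strict inequality $F(u,v)<\max\{F(u,w),F(w,v)\}$ and the equality $F(u,v)=F(u,w)=F(w,v)$ — yield this bound. Applying the induction hypothesis to the two shorter geodesics gives $F(u,w)\leq\max_{e\in P_{u-w}}\widehat{f}(e)$ and $F(w,v)\leq\max_{e\in P_{w-v}}\widehat{f}(e)$. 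Because the edge sets of $P_{u-w}$ and $P_{w-v}$ together partition the edge set of $P_{u-v}$, each of these two maxima is at most $\max_{e\in P_{u-v}}\widehat{f}(e)$, and chaining the inequalities closes the induction.

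I expect no genuinely hard step, as the statement is a clean consequence of the triangle-type estimate already encoded in Lemma~\ref{lemma:tech}; the only points needing care are structural, namely verifying that a subpath of a geodesic is again a geodesic (so that $d(u,w)+d(w,v)=d(u,v)$ and the induction hypothesis legitimately applies) and confirming that both alternatives in the conclusion of Lemma~\ref{lemma:tech} collapse to the single bound $F(u,v)\leq\max\{F(u,w),F(w,v)\}$. As a sanity check, the result also admits a direct argument bypassing the lemma: writing $P_{u-v}$ as $u=x_0,x_1,\dots,x_\ell=v$ and telescoping, $|f(u)-f(v)|\leq\sum_{i=1}^{\ell}|f(x_{i-1})-f(x_i)|\leq\ell\cdot\max_{e\in P_{u-v}}\widehat{f}(e)$, and dividing by $\ell=d(u,v)$ yields the claim.
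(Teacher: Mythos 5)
Your argument is correct and rests on the same key ingredient as the paper's proof, namely Lemma~\ref{lemma:tech}, but the induction is organized differently. The paper fixes the enumeration $u=w_0,e_1,w_1,\dots,e_l,v=w_l$ and proves by induction on $i$ the strengthened invariant $F(u,v)\leq\max\{\widehat{f}(e_1),\dots,\widehat{f}(e_i),F(w_i,v)\}$, peeling one edge off the front of the geodesic at each step; you instead induct on the length $d(u,v)$, split the geodesic at an arbitrary internal vertex $w$ into two shorter geodesics, and apply the induction hypothesis to both halves. The two schemes are logically equivalent here (your splitting specialized to $w=w_1$ recovers the paper's inductive step), and both hinge on the structural point you flag explicitly, that a subpath of a geodesic is again a geodesic, so that $d(u,w)+d(w,v)=d(u,v)$ and Lemma~\ref{lemma:tech} applies. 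Your concluding telescoping remark, $|f(u)-f(v)|\leq\sum_{i=1}^{l}\widehat{f}(e_i)\leq l\cdot\max_{e\in P_{u-v}}\widehat{f}(e)$ followed by division by $l=d(u,v)$, is in fact a complete and more elementary proof of the corollary that bypasses Lemma~\ref{lemma:tech} altogether; it is essentially the interval-covering computation the paper deploys later in the proof of Lemma~\ref{lemma:eqFfhat}, and it is arguably the shortest route to the stated inequality.
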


\begin{proof}
We denote $P_{u-v}$ by $\{u=w_0,e_1,w_1,e_2,w_2, \dots, w_{l-1},e_l,v=w_l\}$ as alternating sequence of vertices and edges and hence $l=d(u,v)$. In order to prove the result, a stronger assertion will be stated, that is, $F(u,v) \leq \max\{ \widehat{f}(e_1),\ldots, \widehat{f}(e_i),F(w_i,v) \}$ for $i=1,\ldots,l-1$.

For $i=1$, Lemma~\ref{lemma:tech} applied to $w_1$ of $P_{u-v}$ and the fact that $\widehat{f}(e_1)=F(u,w_1)$ lead trivially to $F(u,v) \leq \max\{ \widehat{f}(e_1),F(w_1,v) \}$.

Inductively, suppose $F(u,v) \leq \max\{ \widehat{f}(e_1),\ldots, \widehat{f}(e_i),F(w_i,v) \}$. Lemma~\ref{lemma:tech} is again applied, in this case to $w_{i+1}$ as vertex of the path $\{ w_i, e_{i+1},w_{i+1},\ldots,v\}$, obtaining that $$F(w_i,v) \leq \max\{ F(w_i,w_{i+1}),F(w_{i+1},v) \} = \max\{ \widehat{f}(e_{i+1}),F(w_{i+1},v) \}.$$ Thus, the induction hypothesis and this inequality about $F(w_i,v)$ give rise to the result for $i+1$.
\end{proof}

\begin{lemma}
\label{lemma:eqFfhat}
Let $f$ be a greyscale of a graph $G(V,E)$, let $u$ and $v$ be a pair of vertices of $G$ and let $P_{u-v}$ be a $u-v$ geodesic. If $\displaystyle F(u,v) = \max_{e \in P_{u-v}} \widehat{f}(e)$ then $F(u,v)=\widehat{f}(e)$ for every edge $e \in P_{u-v}$.
\end{lemma}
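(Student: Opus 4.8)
The plan is to exploit the elementary fact that $F(u,v)$ never exceeds the \emph{average} of the edge-colours $\widehat{f}(e)$ taken along the geodesic $P_{u-v}$: indeed $F(u,v)=|f(u)-f(v)|/d(u,v)$, and a telescoping application of the triangle inequality bounds $|f(u)-f(v)|$ by the sum of the edge-colours. Under the hypothesis $F(u,v)=\max_{e\in P_{u-v}}\widehat{f}(e)$, this average is forced to equal the maximum of the $\widehat{f}(e)$, and an average can equal the maximum only when every term attains it; this is exactly the desired conclusion.

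Concretely, I would fix the notation $P_{u-v}=\{u=w_0,e_1,w_1,\dots,e_l,w_l=v\}$ with $l=d(u,v)$, so that $\widehat{f}(e_i)=|f(w_{i-1})-f(w_i)|$ for each $i$ (note $F(w_{i-1},w_i)=\widehat{f}(e_i)$, since consecutive vertices of a geodesic are at distance $1$). Writing $M=\max_{e\in P_{u-v}}\widehat{f}(e)$, the hypothesis reads $F(u,v)=M$. The key step is the chain
$$ l\,F(u,v)=|f(u)-f(v)|=\left|\sum_{i=1}^{l}\bigl(f(w_{i-1})-f(w_i)\bigr)\right|\le\sum_{i=1}^{l}\widehat{f}(e_i)\le l\,M. $$
Since $M=F(u,v)$, the two ends coincide, so both inequalities are in fact equalities. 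In particular $\sum_{i=1}^{l}\bigl(M-\widehat{f}(e_i)\bigr)=0$ is a sum of non-negative terms, which forces $\widehat{f}(e_i)=M=F(u,v)$ for every $i$, as claimed.

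This argument is short and self-contained, and it even reproves Corollary~\ref{cor:lowerb} as the inequality $F(u,v)\le M$; there is no substantial obstacle beyond recognizing that an average equal to the maximum forces every term to equal the maximum. If one prefers to remain inside the inductive machinery already developed, an alternative is to induct on $l$ via Lemma~\ref{lemma:tech}: applied to the first interior vertex $w_1$ it yields either $F(u,v)<\max\{F(u,w_1),F(w_1,v)\}$ or $F(u,v)=F(u,w_1)=F(w_1,v)$. The only point needing care is ruling out the strict branch, and this is where the hypothesis enters: $F(u,w_1)=\widehat{f}(e_1)\le M$, while the subpath from $w_1$ to $v$ is itself a geodesic, so Corollary~\ref{cor:lowerb} gives $F(w_1,v)\le M=F(u,v)$; hence $\max\{F(u,w_1),F(w_1,v)\}\le F(u,v)$ and the strict case is impossible. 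The equality branch then delivers $\widehat{f}(e_1)=F(u,v)$ together with $F(w_1,v)=F(u,v)=\max_{e\in P_{w_1-v}}\widehat{f}(e)$, which reproduces the hypothesis on the shorter geodesic $P_{w_1-v}$, and the induction closes.
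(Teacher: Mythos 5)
Your proof is correct and is essentially the paper's own argument: both rest on the telescoping bound $|f(u)-f(v)|\le\sum_{i=1}^{l}\widehat{f}(e_i)\le l\,\max_{i}\widehat{f}(e_i)=l\,F(u,v)=|f(u)-f(v)|$ along the geodesic, the paper merely phrasing the last step as a contradiction (assuming some $\widehat{f}(e_j)<F(u,v)$) where you phrase it as the equality case of an average. The alternative inductive route via Lemma~\ref{lemma:tech} and Corollary~\ref{cor:lowerb} that you sketch is also sound, but your primary argument matches the paper's.
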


\begin{proof}
For the sake of simplicity and without loss of generality, let the alternating sequence of vertices and edges $\{u=w_0,e_1,w_1,e_2,w_2, \dots, w_{l-1},e_l,v=w_l\}$ be the $u-v$ geodesic $P_{u-v}$ where $f(u) \leq f(v)$. Assume to the contrary that there exists $e_j$ an edge of $P_{u-v}$ such that $\widehat{f}(e_j) < F(u,v)$.

The following intervals are considered for $1\leq i \leq l$:

$$I_i=\left\{ \begin{tabular}{ccc} $[f(w_{i-1}),f(w_{i})]$ & \mbox{if} & $f(w_{i-1}) < f(w_{i})$ \\
$[f(w_i),f(w_{i-1})]$ & if & $f(w_{i}) < f(w_{i-1}) $  \\ $\emptyset$ & if & $f(w_{i-1})=f(w_i)$ \end{tabular}\right.$$
Thus, the union of these intervals is a cover of $[f(u),f(v)]$ and therefore the following contradiction is achieved:
$$f(v)-f(u) \leq \sum_{i=1}^l |f(w_{i-1})-f(w_i)|=\sum_{i=1}^l \widehat{f}(e_i)=\widehat{f}(e_j)+\sum_{i=1, i \neq j }^l \widehat{f}(e_i) \leq $$
$$\leq \widehat{f}(e_j)+ (l-1) \max_{1 \leq i \leq l} \widehat{f}(e_i) = \widehat{f}(e_j) +(l-1) F(u,v) <$$
$$< F(u,v) +(l-1)F(u,v) =l F(u,v)=f(v)-f(u).$$
\end{proof}

The two following results highlight the key role that the maximum value of the edge-colour-increase mapping $F$ plays in the \textsc{migg} problem.

\begin{cor}
\label{cor:maxwithv}
Let $f$ be a greyscale of a connected graph $G(V,E)$ and let $u$ and $v$ be a pair of vertices of $G$ such that $\displaystyle F(u,v)=\max_{a,b\in V} F(a,b)$ and $f(u)\leq f(v)$. For each vertex $w$ of a $u-v$ geodesic the following holds: $$f(w)=f(u)+d(u,w)F(u,v).$$
\end{cor}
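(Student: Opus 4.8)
The plan is to derive the result directly from Lemma~\ref{lemma:tech}, exploiting the hypothesis that the pair $(u,v)$ globally maximizes $F$. First I would dispose of the trivial endpoints. For $w=u$ we have $d(u,u)=0$ and the asserted identity reads $f(u)=f(u)$; for $w=v$ it is merely the definition of $F(u,v)$, since $f(u)\le f(v)$ gives $d(u,v)F(u,v)=f(v)-f(u)$. So the real content concerns an intermediate vertex $w$ of the chosen geodesic, with $w\neq u,v$.

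For such a $w$ I would invoke Lemma~\ref{lemma:tech}, which presents two alternatives: either $F(u,v)<\max\{F(u,w),F(w,v)\}$, or $F(u,v)=F(u,w)=F(w,v)$. The crucial observation is that the assumption $F(u,v)=\max_{a,b\in V}F(a,b)$ forces both $F(u,w)\le F(u,v)$ and $F(w,v)\le F(u,v)$, hence $\max\{F(u,w),F(w,v)\}\le F(u,v)$, which rules the strict-inequality alternative out. Therefore the equality alternative must hold, and in particular $F(u,w)=F(u,v)$.

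Finally I would extract $f(w)$ from $F(u,w)=F(u,v)$. Since $w\neq u$, the definition of $F$ gives $|f(w)-f(u)|=d(u,w)F(u,v)$. To remove the absolute value I would use the \emph{moreover} clause of Lemma~\ref{lemma:tech}: the equalities among the three values of $F$ hold only when $f(w)$ lies in the interval with endpoints $f(u)$ and $f(v)$; combined with $f(u)\le f(v)$ this yields $f(u)\le f(w)$, so that $|f(w)-f(u)|=f(w)-f(u)$ and thus $f(w)=f(u)+d(u,w)F(u,v)$, as claimed.

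The main obstacle I anticipate is the sign bookkeeping: Lemma~\ref{lemma:tech} only delivers the unsigned equation $|f(w)-f(u)|=d(u,w)F(u,v)$, whereas the corollary asserts a signed identity, so one must be careful to appeal to the \emph{moreover} clause to guarantee $f(w)\ge f(u)$ rather than leaving the absolute value ambiguous. An alternative route, bypassing Lemma~\ref{lemma:tech}, would combine Corollary~\ref{cor:lowerb} (which gives $F(u,v)\le\max_{e\in P_{u-v}}\widehat f(e)$) with the fact that every edge colour equals $F(a,b)\le F(u,v)$ to force the equality $F(u,v)=\max_{e\in P_{u-v}}\widehat f(e)$, then apply Lemma~\ref{lemma:eqFfhat} and a telescoping argument in which equality in the triangle inequality pins down the sign of each increment; but the direct argument via Lemma~\ref{lemma:tech} appears shortest.
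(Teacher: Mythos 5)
Your proposal is correct and follows essentially the same route as the paper's proof: dispose of the endpoints, use the global maximality of $F(u,v)$ to exclude the strict-inequality alternative in Lemma~\ref{lemma:tech}, deduce $F(u,w)=F(u,v)$ together with $f(w)\in[f(u),f(v)]$, and solve for $f(w)$. Your explicit handling of the absolute value via the \emph{moreover} clause is a small point the paper leaves implicit, but the argument is the same.
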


\begin{proof}
If $w$ is $u$ or $v$, then the result holds trivially. Otherwise, since $\displaystyle F(u,v)=\max_{a,b\in V} F(a,b)$, in particular, it holds that $$F(u,v) \geq \mbox{max}\{F(u,w),F(w,v) \}.$$ Then, by Lemma~\ref{lemma:tech}, $F(u,v)=F(u,w)=F(w,v)$ and $f(w)$ belongs to the interval $[f(u),f(v)]$. Now, from $F(u,v)=F(u,w)$, it is trivial to obtain the required statement for $f(w)$ in the following way:

$$F(u,v)=F(u,w)=\frac{f(w)-f(u)}{d(u,w)} \Rightarrow f(w)=f(u)+d(u,w)F(u,v).$$
\end{proof}

Note that, if $f(v) \leq f(u)$, the result states by a similar proof using $F(u,v)=F(w,v)$ that $f(w)=f(v)+d(w,v)F(u,v)$.

\begin{cor}
\label{cor:maxwithe}
Let $f$ be a greyscale of a connected graph $G(V,E)$ and let $u$ and $v$ be a pair of vertices of $G$ such that $\displaystyle F(u,v)=\max_{a,b\in V} F(a,b)$. For each edge $e$ of a $u-v$ geodesic the following holds: $$\widehat{f}(e)=F(u,v).$$
\end{cor}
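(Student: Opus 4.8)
The plan is to combine Corollary~\ref{cor:lowerb} with Lemma~\ref{lemma:eqFfhat}, using the global maximality of $F(u,v)$ to force the inequality of Corollary~\ref{cor:lowerb} to become an equality. Fix a $u$-$v$ geodesic $P_{u-v}$. Corollary~\ref{cor:lowerb} already provides $F(u,v) \leq \max_{e \in P_{u-v}} \widehat{f}(e)$, so the only thing I would need to add is the reverse inequality $\max_{e \in P_{u-v}} \widehat{f}(e) \leq F(u,v)$.

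The key observation is that the value of $\widehat{f}$ on a single edge is itself a value of $F$. Indeed, if $e=\{a,b\}$ is an edge then $a$ and $b$ are adjacent, so $d(a,b)=1$ and hence $\widehat{f}(e)=|f(a)-f(b)|=|f(a)-f(b)|/d(a,b)=F(a,b)$. Since $F(u,v)=\max_{a,b\in V}F(a,b)$ by hypothesis, every edge $e=\{a,b\}$ of $P_{u-v}$ satisfies $\widehat{f}(e)=F(a,b)\leq F(u,v)$, whence $\max_{e \in P_{u-v}} \widehat{f}(e) \leq F(u,v)$. Putting the two inequalities together yields $F(u,v)=\max_{e \in P_{u-v}} \widehat{f}(e)$, which is exactly the hypothesis of Lemma~\ref{lemma:eqFfhat}; applying that lemma gives $\widehat{f}(e)=F(u,v)$ for every edge $e\in P_{u-v}$, as claimed.

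There is essentially no hard step here: the only point requiring a little care is recognizing that $\widehat{f}(e)=F(a,b)$ for the endpoints $a,b$ of an edge, so that the global maximality of $F(u,v)$ bounds every individual edge colour and converts Corollary~\ref{cor:lowerb} into an equality. I would expect this small remark to be the crux, since everything else is quoting the two preceding connecting results.

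As an alternative, I note that the statement also follows immediately from Corollary~\ref{cor:maxwithv}: writing $P_{u-v}$ as $u=w_0,\ldots,w_l=v$ and assuming $f(u)\leq f(v)$, that corollary gives $f(w_i)=f(u)+d(u,w_i)F(u,v)=f(u)+i\,F(u,v)$ (using $d(u,w_i)=i$ along a geodesic), so each edge $e=\{w_{i-1},w_i\}$ telescopes to $\widehat{f}(e)=f(w_i)-f(w_{i-1})=F(u,v)$. Either route is short; I would lead with the first, since Corollary~\ref{cor:lowerb} and Lemma~\ref{lemma:eqFfhat} are tailored precisely to relate $\widehat{f}$ and $F$ along geodesics.
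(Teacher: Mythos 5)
Your proposal is correct, but your primary route differs from the paper's. The paper proves this corollary directly from Corollary~\ref{cor:maxwithv}: it takes the endpoints $a,b$ of an edge $e$ of the geodesic, notes $d(u,b)=d(u,a)\pm 1$, substitutes the formula $f(w)=f(u)+d(u,w)F(u,v)$ for both endpoints, and reads off $\widehat{f}(e)=|\pm F(u,v)|=F(u,v)$ --- that is, exactly your ``alternative'' telescoping argument. Your lead argument instead goes through Corollary~\ref{cor:lowerb} and Lemma~\ref{lemma:eqFfhat}, with the crux being the observation that for any edge $e=\{a,b\}$ one has $d(a,b)=1$ and hence $\widehat{f}(e)=F(a,b)\leq F(u,v)$ by global maximality; this turns the inequality of Corollary~\ref{cor:lowerb} into the equality required by Lemma~\ref{lemma:eqFfhat}. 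Both arguments are sound. Your route has the merit of not needing Corollary~\ref{cor:maxwithv} at all (it uses only the two results explicitly framed as connecting $\widehat{f}$ and $F$ on geodesics, and the elementary remark that edge colours are themselves values of $F$), whereas the paper's route is slightly shorter once Corollary~\ref{cor:maxwithv} is in hand and additionally pins down the vertex colours along the geodesic, not just the edge colours.
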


\begin{proof}
Without loss of generality, $f(u) \leq f(v)$ can be assumed. Let $a$ and $b$ be the vertices of the edge $e$. Corollary~\ref{cor:maxwithv} is applied to $a$ and $b$, and since $d(u,b)=d(u,a)\pm 1$, it holds that $f(a)=f(u)+d(u,a)F(u,v)$ and
$f(b)=f(u)+(d(u,a)\pm 1) F(u,v)$.

Then,  $\widehat{f}(e)=|f(b)-f(a)|=|\pm F(u,v)|=F(u,v)$.
\end{proof}

Next, a property for minimum gradation greyscales related to antipodal vertices which are white- and black-coloured is established.

\begin{cor}
\label{cor:antizo}
If $f$ is a minimum gradation greyscale  of a graph $G$ and $u$ and $v$ are vertices of $G$ such that $f(u)=0$ and $f(v)=1$, then $u$ and $v$ are antipodal vertices.
\end{cor}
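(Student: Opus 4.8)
The plan is to argue by contradiction: suppose $f$ is a minimum gradation greyscale with $f(u)=0$ and $f(v)=1$, but $u$ and $v$ are \emph{not} antipodal, so that $d(u,v)=k$ for some $k<d(G)$. I would then exhibit a greyscale whose gradation vector is strictly smaller than ${\mathcal{G}}_f$ in the lexicographical order, contradicting the minimality of $f$. The natural competitor is a support greyscale attached to a genuinely antipodal pair, since Lemma~\ref{lemma:sggorro} pins down exactly what its leading component is.

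First I would bound the leading (largest) component of ${\mathcal{G}}_f$ from below using only the local datum at $u$ and $v$. Since $f(u)=0$ and $f(v)=1$, the edge-colour-increase mapping gives $F(u,v)=\frac{|f(u)-f(v)|}{d(u,v)}=\frac{1}{k}$. Fixing any $u-v$ geodesic $P_{u-v}$ and applying Corollary~\ref{cor:lowerb}, I obtain $\frac{1}{k}=F(u,v)\le \max_{e\in P_{u-v}}\widehat{f}(e)\le \max_{e\in E}\widehat{f}(e)$. Because the components of a gradation vector are listed in decreasing order, $\max_{e\in E}\widehat{f}(e)$ is precisely the first component of ${\mathcal{G}}_f$; hence that first component is at least $\frac{1}{k}$.

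Next I would produce the benchmark. Since the diameter of a connected graph is attained, there is a pair of antipodal vertices $u',v'$, and the support greyscale $f\langle u',v' \rangle$ is a legitimate greyscale, taking values in $[0,1]$ with $f\langle u',v'\rangle(u')=0$ and $f\langle u',v'\rangle(v')=1$. By Lemma~\ref{lemma:sggorro} the only possible values of $\widehat{f}\langle u',v'\rangle$ are $\frac{1}{d(G)}$, $\frac{1}{2d(G)}$ and $0$, so the first component of ${\mathcal{G}}_{f\langle u',v'\rangle}$ equals $\frac{1}{d(G)}$. As $k<d(G)$ forces $\frac{1}{d(G)}<\frac{1}{k}$, the leading component $\frac{1}{d(G)}$ of ${\mathcal{G}}_{f\langle u',v'\rangle}$ is strictly smaller than $\frac{1}{k}$, which is in turn at most the leading component of ${\mathcal{G}}_f$. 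Lexicographic order is decided at the first coordinate in which the vectors differ, so ${\mathcal{G}}_{f\langle u',v'\rangle}<{\mathcal{G}}_f$, contradicting the assumption that $f$ is a minimum gradation greyscale. Therefore $d(u,v)=d(G)$, i.e. $u$ and $v$ are antipodal.

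The argument is short once the right pieces are assembled, and I do not anticipate a serious obstacle; the only genuinely informative step is the lower bound on the leading component of ${\mathcal{G}}_f$, which must be extracted from the single pair $(u,v)$ rather than from global information about $f$. That is exactly what Corollary~\ref{cor:lowerb} supplies, and the remaining comparison collapses to inspecting first components because lexicographic order is settled there as soon as they differ.
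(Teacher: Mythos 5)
Your proof is correct and follows essentially the same route as the paper: both arguments combine Lemma~\ref{lemma:sggorro} (the support greyscale of an antipodal pair has maximum edge colour $\frac{1}{d(G)}$, which by minimality of $f$ bounds $\max_{e\in E}\widehat{f}(e)$ from above) with Corollary~\ref{cor:lowerb} (which bounds it from below by $F(u,v)=\frac{1}{d(u,v)}$), forcing $d(u,v)\geq d(G)$. The only difference is that you phrase it as a contradiction while the paper writes the same chain of inequalities directly; the substance is identical.
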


\begin{proof}
By Lemma~\ref{lemma:sggorro}, the values of the colour of the edges by any support greyscale are $\frac{1}{d(G)}$, $\frac{1}{2d(G)}$ and $0$, sorted by decreasing order. Hence, $\widehat{f}(e)\leq \frac{1}{d(G)}$ for all $e \in E$ and so $\displaystyle \max_{e \in E} \widehat{f}(e) \leq \frac{1}{d(G)}$. Furthermore, since $f(u)=0$ and $f(v)=1$, $F(u,v)=\frac{1}{d(u,v)}$ and by Corollary \ref{cor:lowerb} $\displaystyle F(u,v)=\frac{1}{d(u,v)} \leq \max_{e \in E} \widehat{f}(e) \leq \frac{1}{d(G)}$. Then $d(u,v) \geq d(G)$, that is, $d(u,v)=d(G)$.
\end{proof}

The characterization of the minimum gradation vector for paths is a consequence of some of the above-proved results.

\begin{cor}
\label{cor:miggpaths}
The minimum gradation vector of the path of length $n$ is the vector whose all components are equal to $\frac{1}{n}$.
\end{cor}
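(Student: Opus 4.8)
The plan is to combine the lower bound of Corollary~\ref{cor:lowerb} with the rigidity of Lemma~\ref{lemma:eqFfhat}, exploiting the special feature of a path that the unique geodesic between its endpoints is the whole graph. First I would record the metric facts: a path $P$ of length $n$ has $n+1$ vertices, $n$ edges and diameter $d(P)=n$; its only antipodal pair is the pair of endpoints $u_0,u_n$, and the single $u_0$-$u_n$ geodesic is $P$ itself, so it contains all $n$ edges. In particular, for every greyscale $f$ one has $\max_{e\in P_{u_0-u_n}}\widehat f(e)=\max_{e\in E}\widehat f(e)$, and the gradation vector has exactly $n$ components. For the lower bound, let $f$ be any greyscale of $P$; by definition there are vertices $u,v$ with $f(u)=0$ and $f(v)=1$, so $F(u,v)=\frac{1}{d(u,v)}\geq\frac1n$ because $d(u,v)\leq d(P)=n$. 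Corollary~\ref{cor:lowerb} then gives $\frac1n\leq F(u,v)\leq\max_{e\in E}\widehat f(e)$, so the first (largest) component of every gradation vector is at least $\frac1n$; consequently no gradation vector can be lexicographically smaller than $(\frac1n,\dots,\frac1n)$.

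Next I would establish rigidity in the boundary case. Suppose a greyscale $f$ satisfies $\max_{e\in E}\widehat f(e)=\frac1n$. The inequality chain $\frac1n\leq\frac{1}{d(u,v)}=F(u,v)\leq\frac1n$ forces $d(u,v)=n$, so $u$ and $v$ must be the endpoints of $P$ and the $u$-$v$ geodesic is all of $P$. Then $F(u,v)=\frac1n=\max_{e\in P_{u-v}}\widehat f(e)$, and Lemma~\ref{lemma:eqFfhat} yields $\widehat f(e)=\frac1n$ for every edge $e$ of $P$. Hence the only gradation vector whose first component equals $\frac1n$ is $(\frac1n,\dots,\frac1n)$ itself.

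It remains to exhibit a greyscale attaining this vector. The support greyscale $f\langle u_0,u_n\rangle$ assigns to each vertex $w$ the value $\frac{d(w,u_0)-d(w,u_n)+n}{2n}=\frac{d(w,u_0)}{n}$, using $d(w,u_0)+d(w,u_n)=n$ on the path; thus consecutive vertices differ by exactly $\frac1n$ and every edge receives colour $\frac1n$, realizing $(\frac1n,\dots,\frac1n)$. Combining the three observations—vectors with first component strictly greater than $\frac1n$ are lexicographically larger, the unique vector with first component equal to $\frac1n$ is $(\frac1n,\dots,\frac1n)$, and this vector is achieved—yields that $(\frac1n,\dots,\frac1n)$ is the minimum gradation vector.

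The only genuinely delicate step is ruling out candidate gradation vectors whose maximum entry equals $\frac1n$ but whose remaining entries are smaller, since such a vector would be lexicographically preferable. The device that closes this gap is precisely the coincidence, special to paths, that the single geodesic between the antipodal endpoints uses every edge of the graph; this lets Lemma~\ref{lemma:eqFfhat}, applied to that one geodesic, pin down all $n$ edge-colours simultaneously to the common value $\frac1n$.
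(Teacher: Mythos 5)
Your proof is correct and follows essentially the same route as the paper: the lower bound $\frac1n$ via Corollary~\ref{cor:lowerb}, achievability via the support greyscale, and rigidity of the boundary case via Lemma~\ref{lemma:eqFfhat}. The only (harmless) deviation is that where the paper invokes Corollary~\ref{cor:antizo} to place the extreme tones at the endpoints, you re-derive that fact inline from the equality $F(u,v)=\frac1n$ forcing $d(u,v)=n$.
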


\begin{proof}
Let $\{u=w_0,e_1,w_1,e_2,w_2, \dots, w_{n-1},e_n,v=w_n\}$ be the path of length $n$  and let $f$ be a greyscale such that ${\mathcal{G}_f}$ is the minimum gradation vector of the path. There must exist two vertices mapped to $0$ and $1$ by $f$, and by Corollary~\ref{cor:antizo} the only option is that they are $u$ and $v$. Thus, $F(u,v)=\frac{1}{n}$.

By other hand, it is easy to check that $\hat{f}\langle u,v \rangle (e_i)=\frac{1}{n}$ for $i=1 \ldots n$, being $f\langle u,v \rangle $ the support greyscale for $u$ and $v$. So, $\displaystyle{\max_{1 \leq i \leq n} \widehat{f}(e_i)\leq \frac{1}{n}}$ follows. Moreover, Corollary~\ref{cor:lowerb} guarantees that $F(u,v)$ is a lower bound of $\displaystyle{\max_{1 \leq i \leq n} \widehat{f}(e_i)}$.

Hence $\frac{1}{n}=F(u,v) \leq \displaystyle{\max_{1 \leq i \leq n} \widehat{f}(e_i)} \leq \frac{1}{n}$, and by Lemma~\ref{lemma:eqFfhat} we conclude that $\hat{f}(e_i)=\frac{1}{n}$ for $i=1\ldots n$.
\end{proof}

Our next aim is to design algorithms which provide all the minimum gradation greyscales of a connected graph, for both \textsc{migg} and \textsc{rmigg} problems. These greyscales are obtained in a stepwise manner by incomplete greyscales such that each of these is compatible with the previous one. Thus, an iterative procedure is carried out that is based on the operation of deleting coloured edges and isolated coloured vertices. We proceed first to devise the \textsc{$V_c$-Compatible-Complete-Mapping} common subroutine which is applied to solve both the different \textsc{rmigg} problems according to the possible existence of the extreme tones as prefixed colours, and  the \textsc{migg} problem.

\textsc{Procedure:} \textsc{$V_c$-Compatible-Complete-Mapping}

\textbf{Input:} An incomplete $V_c$-greyscale $g$ of a connected graph $G(V,E)$.

\textbf{Output:} A mapping $f$ on $V$ compatible with $g$.

\begin{enumerate}
  \item Initialize $G_1(V_1^1,E_1) \leftarrow G(V,E)$

 \item Initialize  $i \leftarrow 1$

 \item Initialize  $l(1) \leftarrow 1$

  \item \label{step:compatible} \textbf{If} $u \in V_c$ \textbf{do} $f(u)=g(u)$;

  \item \label{step:while-loop} \textbf{While} $|V_c|<|V|$ \textbf{do}

  \begin{enumerate}

   \item Compute the distance matrix $D_i$ of $G_i$;

   \item  \label{step:maximum}  Compute the finite value $M_i= \displaystyle{\max_{1\leq j \leq l(i)} }\{ F(a,b) : \{a,b\} \subseteq V_i^j\cap V_c  \}$  and the set $\displaystyle S_i=\{\{u,v\} \subseteq V_i \cap V_c  :
   F(u,v)=M_i \}$, where distances are taken from $D_i$;

   \item \label{step:maxgeo} \textbf{For} each $\{u,v\}\in S_i$ and considering distances from $D_i$ \textbf{do}
      \begin{enumerate}
        \item Compute  $A=\{w\in V_i : w \in \mbox{$u-v$ geodesic of $G_i$}\}$;
        \item \label{step:assignation} \textbf{For} each $w \in A$ \textbf{do} $$f(w)=\left\{\begin{array}{lll} f(u)+d(w,u)\ M_i & \mbox{if} & f(u)\leq f(v) \\
f(v)+d(w,v)\ M_i & \mbox{if} & f(u)>f(v)\end{array}\right. $$
\item $V_c \leftarrow V_c \cup A$
      \end{enumerate}

   \item Let $G_{i+1}(V_{i+1},E_{i+1})$ be the subgraph of $G_i(V_i,E_i)$ obtained by deleting all the edges $w_1w_2$ with $w_1,\, w_2 \in A$
    and removing the resulting isolated vertices. Let $V_{i+1}^j$ be the vertex-sets of the connected components of $G_{i+1}$ for $j=1 \ldots l(i+1)$;

   \item \label{step:terminals} \textbf{If} $S_i=\emptyset$, each set $V_i^j$ contains exactly one vertex $w^j$ in $V_c$  \textbf{then}

    \begin{enumerate}
    \item \textbf{For} $j:=1$ to $l(i)$ \textbf{do} $f(u)=f(w^j)$ with $u \in V_i^j$;
    \item $V_c \leftarrow V_c \cup V_i^1 \cup \ldots \cup V_i^{l(i)}$

    \end{enumerate}

   \item $i \leftarrow i+1$;

   \end{enumerate}
\end{enumerate}

Note that the mapping generated by \textsc{$V_c$-Compatible-Complete-Mapping} procedure and the input incomplete $V_c$-greyscale have the same range, and therefore that mapping is not necessarily a greyscale due to the possible nonexistence of the extreme tones as values reached by the mapping. It is explanatory to point out that the grey tone assigned by Step~\ref{step:assignation} can be also obtained as follows:

$$f(w)=\left\{\begin{array}{lll} f(v)-d(w,v)\ M_i & \mbox{if} & f(u)\leq f(v) \\
f(u)-d(w,u)\ M_i & \mbox{if} & f(u)>f(v)\end{array}\right. $$

Next, the \textsc{rmigg} problems are resolved according to the nature of the prefixed colours, that is, distinguishing whether or not the extreme tones are prefixed values.

\begin{thm}
\label{thm:01RMIGG} Let $g$ be an incomplete $V_c$-greyscale of a connected graph $G(V,E)$ such
that the extreme tones $0$ and $1$ are reached by $g$.  Then it is possible to obtain, in polynomial
time, a greyscale of $G$ compatible with $g$ whose gradation vector is the minimum one among all
gradation vectors of greyscales compatible with $g$.

Moreover, such greyscale is unique and it is provided by the \textsc{$V_c$-Compatible-Complete-Mapping} algorithm.
\end{thm}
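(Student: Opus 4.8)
The plan is to establish four things in turn: (i) the procedure halts after polynomially many iterations, each of polynomial cost; (ii) its output $f$ is a greyscale compatible with $g$; (iii) $\mathcal{G}_f$ is the lexicographic minimum among gradation vectors of compatible greyscales; and (iv) $f$ is the unique such greyscale. The engine for (iii) and (iv) is a single inductive claim: reading the gradation vector from its largest entry downwards, each successive block of equal entries is forced, both in value and in the set of edges that carry it, so that any compatible greyscale attaining the minimum must coincide with $f$.

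First, the easy bookkeeping. Compatibility is immediate: Step~\ref{step:compatible} sets $f(u)=g(u)$ on $V_c$, and one checks these values are never overwritten, since every later assignment is made to vertices that enter $V_c$ only when first coloured. The extreme tones $0$ and $1$ lie in the range of $g$ by hypothesis and sit on vertices of $V_c$, so $f$ reaches them and is therefore a genuine greyscale. For termination, I would observe that every pass of the while loop of Step~\ref{step:while-loop} with $S_i\neq\emptyset$ deletes at least one edge (the geodesic between an $M_i$-achieving pair contains an edge), so there are $O(m)$ such passes, after which Step~\ref{step:terminals} colours what remains; each pass costs a distance-matrix computation and an $O(n^2)$ scan of pairs, giving an overall polynomial bound. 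Here I would also need the invariant that every connected component of each $G_i$ still contains a coloured vertex, which holds initially because $G$ is connected with $V_c\neq\emptyset$ and is preserved because we only ever delete edges both of whose endpoints have already been coloured; this invariant is what makes Step~\ref{step:terminals} well posed.

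Next, optimality and uniqueness, argued by induction on the iteration index. The base case fixes the top block. For any compatible greyscale $f'$, Corollary~\ref{cor:lowerb} applied to a pair $\{u,v\}$ realizing $M_1=\max F$ gives $\max_{e\in E}\widehat{f'}(e)\ge F(u,v)=M_1$, so the largest entry of $\mathcal{G}_{f'}$ is at least $M_1$; since Corollary~\ref{cor:maxwithe} makes the algorithm attain exactly $M_1$ on the processed geodesics and never exceeds it afterwards, $M_1$ is optimal. Moreover, if $f'$ is optimal then its maximum equals $M_1$, whence Lemma~\ref{lemma:eqFfhat} forces \emph{every} edge of \emph{every} $u$–$v$ geodesic (over all pairs in $S_1$) to the value $M_1$, and Corollary~\ref{cor:maxwithv} forces the colour of each intermediate vertex to the interpolated value used in Step~\ref{step:assignation}. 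This simultaneously shows that the set $A$ coloured in the first pass is forced, that its colours are uniquely determined, and that no optimal greyscale can carry fewer edges at value $M_1$ than $f$ does. After deleting these edges and the isolated vertices, the same argument applies verbatim to each component of $G_2$ with the now-enlarged $V_c$, because Corollary~\ref{cor:lowerb}, Lemma~\ref{lemma:eqFfhat} and Corollary~\ref{cor:maxwithv} use only the metric of the current graph; this yields the block $M_2$ and its forced colouring, and so on. The recursion terminates at Step~\ref{step:terminals}, where each surviving component has a single coloured vertex and is coloured constantly; by connectivity this is the unique way to make all remaining edges carry colour $0$, which is optimal.

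The main obstacle I anticipate is the careful bookkeeping that turns these per-iteration forcing statements into a statement about the whole, lexicographically ordered gradation vector. Two points need genuine care. One is that the sequence $M_1\ge M_2\ge\cdots$ is non-increasing and that the edges deleted at stage $i$ are precisely the stage-$i$ block of $\mathcal{G}_f$: since deleting edges only increases distances, every surviving pair has $F$ no larger than before, but a vertex coloured during stage $i$ can form new pairs, so one must check that these cannot reintroduce a value exceeding $M_i$ and thereby disorder the vector. The second is the consistency of Step~\ref{step:assignation} when an $M_i$-pair admits several geodesics or when geodesics from different pairs overlap: each vertex on such an overlap must receive the same interpolated colour, which I would deduce from the forcing in Corollary~\ref{cor:maxwithv}, since that value is determined by $u$, $d(u,w)$ and $M_i$ alone and is therefore independent of the route. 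Once these are in place, concatenating the forced blocks shows $\mathcal{G}_f$ is the lexicographic minimum and that any minimizer agrees with $f$ on every vertex, i.e.\ $f$ is unique; the fact that $g$ already pins the extreme tones to specific vertices is what removes the freedom of choosing an antipodal pair and so makes the minimizer unique here, in contrast with the unrestricted problem.
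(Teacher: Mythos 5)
Your plan follows essentially the same route as the paper's proof: polynomial termination, compatibility and consistency of the assignments, the block structure $(M_1,\ldots,M_1,M_2,\ldots,M_2,\ldots,M_r,\ldots,M_r,0,\ldots,0)$ of the output vector via Corollary~\ref{cor:maxwithe}, and minimality/uniqueness by induction on the iteration index, forcing first the edge colours and then the vertex colours along the $M_i$-geodesics (you invoke Lemma~\ref{lemma:eqFfhat} and Corollary~\ref{cor:maxwithv} where the paper re-runs the interval-covering argument inline, but the mechanism is identical). The one substantive step you defer is the one on which the paper spends most of its effort: showing that the sequence of maxima is \emph{strictly} decreasing, $M_i>M_{i+1}$, not merely non-increasing as you state; strictness is what keeps the blocks of the gradation vector separated and lets the lexicographic induction close, and the paper needs a three-case analysis (according to whether the vertices realizing $M_{i+1}$ were coloured before, partly during, or entirely during iteration $i$) precisely because vertices coloured at stage $i$ create new pairs whose $F$-value must be bounded. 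You correctly flag this as the main obstacle, so the plan is sound, but as written that step is identified rather than proved.
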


\begin{proof}
In order to state the result, the \textsc{$V_c$-Compatible-Complete-Mapping} algorithm is first proved to be finite and polynomial. Then, it is shown to provide the unique greyscale compatible with $g$ such that its gradation vector is minimum among all gradation vectors of greyscales compatible with $g$.

At least one vertex is coloured at every iteration of the while-loop in Step~\ref{step:while-loop}, either by Substeps $(e)$ or $(f)$, and hence it ends after at most $|V|-|V_c|$ iterations. The time complexity of computing distance matrices (Step $3(c)$) dominates the time complexity of the rest of the steps and that can be done in $O(n^3)$ time applying the Floyd-Warshall algorithm~\cite{f-a97sp-62,w-tbm-62}. So, the while-loop and the Step $3(c)$ determine the polynomial time of the \textsc{$V_c$-Compatible-Complete-Mapping} algorithm, that is, $O(n^4)$ time.

Now, the algorithm output $f$ is proved to be a well-defined greyscale compatible with $g$. Its values are either the values of $g$ (Step~\ref{step:compatible}) or are assigned by Step~\ref{step:assignation} to vertices belonging to $u-v$ geodesics such that $M_i=F(u,v)$. Owing to Step~\ref{step:compatible}, $f$ is compatible with $g$ and since both extreme tones are prefixed colours, its range is the interval $[0,1]$. It is necessary to check that the colour assignment by Step~\ref{step:assignation} is consistent, that is, both when a vertex in $V_c$ is again coloured by Step~\ref{step:assignation} in the $i$-iteration, and in the case of a vertex belonging to different such geodesics. First, let $w$ be a vertex with colour $f(w)$ belonging to a $u-v$ geodesic such that $M_i=F(u,v)$ (maximum value of Step~\ref{step:maximum}); in particular, $F(u,v)$ is greater than $F(u,w)$ and $F(w, v)$. This fact along with Lemma~\ref{lemma:tech} for $f$, $u$, $v$ and the subgraph induced by $V_{i-1} \cap V_c$ lead to $F(u,v)=F(u,w)=F(w,v)$ and $f(w)$ belonging to the interval of extremes $f(u)$ and $f(v)$. Whether $f(u) \leq f(v)$ (the reasoning is similar when $f(u) > f(v)$ and taking into account $F(u,v)=F(w,v)$), it holds that $$F(u,v)=F(u,w) \Rightarrow M_i=\frac{f(w)-f(u)}{d(w,u)} \Rightarrow f(w)=f(u)+d(w,u)\,M_i.$$ In other words, the value assigned to $w$ by Step~\ref{step:assignation} and its previous colour are the same.

On the other hand, let $P_{{u_{1}}-v_1}$ and $P_{u_2-v_2}$ be two geodesics such that $M_i=F(u_1,v_1)=F(u_2,v_2)$ (we assume, without loss of generality, $f(u_1)\leq f(v_1)$ and $f(u_2)\leq f(v_2)$) and let $w$ be a vertex in $V-V_c$ belonging to both $u_1v_1$ and $u_2v_2$. Suppose on the contrary that $f(u_1)+d(u_1,w)\,M_i>f(u_2)+d(u_2,w)\,M_i$ (the arguments are similar if the other inequality is assuming), and therefore
\begin{equation}
 \label{eq1-thm:01RMIGG}
f(u_1)>f(u_2)+(d(u_2,w)-d((u_1,w))\,M_i,
\end{equation}
From $M_i=F(u_1,v_1)$ it holds that
\begin{equation}
 \label{eq2-thm:01RMIGG}
f(v_1)=f(u_1)+(d(u_1,w)+d(w,v_1))\,M_i.
\end{equation}
Taking into account (\ref{eq1-thm:01RMIGG}) and (\ref{eq2-thm:01RMIGG}),
$$f(v_1)>f(u_2)+(d(u_2,w)-d((u_1,w))\,M_i+(d(u_1,w)+d(w,v_1))\,M_i=$$
$$=f(u_2)+(d(u_2,w)+d(w,v_1))\,M_i \geq f(u_2)+d(u_2,v_1)\,M_i \Rightarrow$$
$$\Rightarrow \frac{f(v_1)-f(u_2)}{d(u_2,v_1)} \geq M_i,$$
which is a contradiction.

The gradation vector of $f$ is now proved to be $$(M_1,\ldots,M_1,M_2,\ldots,M_2,\ldots,M_r,\ldots,M_r,0,\ldots,0),$$ where $r$ is the number of executions of the while-loop, and the number of zeros can be null. For every edge of $G$ either its vertices belong to the set $A$ at only one execution of Step~\ref{step:maxgeo} or its colour is white (extreme tone $0$) due to the situation described in Step~\ref{step:terminals}. In the first case, its colour is $M_i$ by Corollary~\ref{cor:maxwithe} applied to each pair of vertices in $S_i$ and every connected component of $G_i$ at which the maximum value $M_i$ is reached. Furthermore, it is necessary to guarantee that the sequence of maximum values computed in Step~\ref{step:maximum} is strictly decreasing in $i$. The value $d_i$ denotes the distance measured in $G_i$ and is listed in the matrix $D_i$; it is clear that $d_i(u,v) \leq d_{i+1}(u,v)$. Let $M_i$ and $M_{i+1}$ be the maximum values of the edge-colour-increase mapping $F$ on $G_i$ and $G_{i+1}$ and computed by the iterations $i$ and $i+1$ of Step~\ref{step:maximum}, respectively. Let also $u_{i+1}$ and $v_{i+1}$ be two vertices such that $M_{i+1}=F(u_{i+1},v_{i+1})$ on $G_{i+1}$. The executions at which the vertices $u_{i+1}$ and $v_{i+1}$ are coloured determine three cases:
\begin{enumerate}
\item  Both vertices $u_{i+1}$ and $v_{i+1}$ are coloured before the $i$-iteration takes place. Then, if $\{u_{i+1},v_{i+1} \} \notin S_i$, it follows that $$M_i > \frac{|f(u_{i+1})-f(v_{i+1})|}{d_{i}(u_{i+1},v_{i+1})}\geq \frac{|f(u_{i+1})-f(v_{i+1})|}{d_{i+1}(u_{i+1},v_{i+1})}=M_{i+1}.$$
    Otherwise, the fact that $\{u_{i+1},v_{i+1} \} \in S_i$ leads to $d_i(u_{i+1},v_{i+1}) < d_{i+1}(u_{i+1},v_{i+1})$ and then
    $$M_i = \frac{|f(u_{i+1})-f(v_{i+1})|}{d_{i}(u_{i+1},v_{i+1})} >  \frac{|f(u_{i+1})-f(v_{i+1})|}{d_{i+1}(u_{i+1},v_{i+1})}=M_{i+1}.$$

\item One of the vertices $u_{i+1}$ and $v_{i+1}$ is coloured at the $i$-iteration but the other one is previously. Without loss of generality, $f(u_{i+1}) \leq f(v_{i+1})$ can be assumed and then we distinguish two possibilities depending on whether either $u_{i+1}$ receives its grey tone at the $i$-iteration or $v_{i+1}$ does.

    \begin{enumerate}

    \item If the vertex $u_{i+1}$ is coloured at the $i$-iteration, it belongs to some $u_i-v_i$ geodesic such that $M_i=F(u_i,v_i)$ on $G_i$ (the inequality $f(u_i) \leq f(v_i)$ can be assumed) and so it holds that
    \begin{equation}\label{eq3-thm:01RMIGG}   f(u_{i+1})=f(u_i)+d_i(u_i,u_{i+1})\,M_i \Rightarrow f(u_i)=f(u_{i+1})-d_i(u_i,u_{i+1})\,M_i. \end{equation}
    On the other hand, $v_{i+1}$ is coloured previously to the $i$-iteration and the inequality between $M_i$ and $M_{i+1}$ is achieved by distinguishing if $\{u_i,v_{i+1}\}$ belongs or not to $S_i$. In case that $\{u_i,v_{i+1}\} \notin S_i$, the equality in (\ref{eq3-thm:01RMIGG}) leads to:
    $$M_i > \frac{f(v_{i+1})-f(u_{i})}{d_{i}(u_{i},v_{i+1})}\geq \frac{f(v_{i+1})-f(u_{i})}{d_{i}(u_{i},u_{i+1})+d_i(u_{i+1},v_{i+1})} \geq $$ $$ \geq \frac{f(v_{i+1})-f(u_{i})}{d_{i}(u_{i},u_{i+1})+d_{i+1}(u_{i+1},v_{i+1})}= $$ $$= \frac{f(v_{i+1})-f(u_{i+1})+d_i(u_i,u_{i+1})\, M_i}{d_{i}(u_{i},u_{i+1})+d_{i+1}(u_{i+1},v_{i+1})} = $$ $$= \frac{d_{i+1}(u_{i+1},v_{i+1})\, M_{i+1}+d_i(u_i,u_{i+1})\, M_i}{d_{i}(u_{i},u_{i+1})+d_{i+1}(u_{i+1},v_{i+1})}.  $$ If $M_i \leq M_{i+1}$, then $$M_i > \frac{[d_{i+1}(u_{i+1},v_{i+1})+d_i(u_i,u_{i+1})]\, M_i}{d_{i}(u_{i},u_{i+1})+d_{i+1}(u_{i+1},v_{i+1})} = M_i,$$ which is a contradiction, and therefore $M_i > M_{i+1}$.

In case that $\{u_i,v_{i+1}\} \in S_i$ and since $d_i(u_i,v_{i+1}) < d_i(u_i,u_{i+1})+d_{i+1}(u_{i+1},v_{i+1}) $ due to the connection of $u_{i+1}$ and $v_{i+1}$ in $G_{i+1}$, it follows that
$$M_i = \frac{f(v_{i+1})-f(u_i)}{d_i(u_i,v_{i+1})} > \frac{f(v_{i+1})-f(u_i)}{d_i(u_i,u_{i+1})+d_{i+1}(u_{i+1},v_{i+1})},$$ and the reasoning goes on as in the lines above.

\item If the vertex $v_{i+1}$ is coloured at the $i$-iteration, similar arguments lead to the result taking into account two facts:  $v_{i+1}$ belongs to some $u_i-v_i$ geodesic such that $M_i=F(u_i,v_i)$ on $G_i$ and $f(u_i) \leq f(v_i)$, which implies that $f(v_{i+1})=f(v_i)-d_i(v_i,v_{i+1})\, M_i$, and the membership or not of $\{u_{i+1},v_i\}$  in $S_i$.
    \end{enumerate}

\item Both vertices $u_{i+1}$ and $v_{i+1}$ are coloured by the $i$-iteration. Therefore, there exist two pairs of vertices $\{ u_i^1, v_i^1\}$ and $\{ u_i^2, v_i^2\}$ of $G_i$ such that $M_i=F(u_i^1,v_i^1)=F(u_i^2,v_i^2)$ and the vertices $u_{i+1}$ and $v_{i+1}$ belong to some $u_i^1-v_i^1$ and $u_i^2-v_i^2$ geodesic, respectively (without loss of generality we may suppose that $f(u_{i+1})\leq f(v_{i+1})$, $f(u_i^1)\leq f(v_i^1)$ and $f(u_i^2)\leq f(v_i^2)$). Then,
    \begin{equation}\label{eq4-thm:01RMIGG}f(u_{i+1})=f(u_i^1)+d_i(u_i^1,u_{i+1})\, M_{i} \Rightarrow f(u_i^1)=f(u_{i+1})-d_i(u_i^1,u_{i+1})\, M_{i}\end{equation}
    \begin{equation}\label{eq5-thm:01RMIGG}f(v_{i+1})=f(v_i^2)-d_i(v_i^2,v_{i+1})\, M_{i} \Rightarrow f(v_i^2)=f(v_{i+1})+d_i(v_i^2,v_{i+1})\, M_{i}\end{equation}

Whether $\{ u_i^1, v_i^2\} \notin S_i$ and from (\ref{eq4-thm:01RMIGG}) and (\ref{eq5-thm:01RMIGG}), it holds that
$$M_i > \frac{f(v_i^2)-f(u_i^1)}{d_i(u_i^1,v_i^2)} \geq \frac{f(v_i^2)-f(u_i^1)}{d_i(u_i^1,u_{i+1})+d_{i+1}(u_{i+1},v_{i+1})+d_i(v_{i+1},v_i^2)}=$$
$$=\frac{f(v_{i+1})+d_i(v_i^2,v_{i+1})\, M_{i}-f(u_{i+1})+d_i(u_i^1,u_{i+1})\, M_{i}}{d_i(u_i^1,u_{i+1})+d_{i+1}(u_{i+1},v_{i+1})+d_i(v_{i+1},v_i^2)}=$$
$$=\frac{d_{i+1}(u_{i+1},v_{i+1})\, M_{i+1}+d_i(v_i^2,v_{i+1})\, M_{i}+d_i(u_i^1,u_{i+1})\, M_{i}}{d_i(u_i^1,u_{i+1})+d_{i+1}(u_{i+1},v_{i+1})+d_i(v_{i+1},v_i^2)}$$ If $M_i \leq M_{i+1}$, then $$M_i > \frac{[d_i(u_i^1,u_{i+1})+d_{i+1}(u_{i+1},v_{i+1})+d_i(v_{i+1},v_i^2)]\, M_i}{d_i(u_i^1,u_{i+1})+d_{i+1}(u_{i+1},v_{i+1})+d_i(v_{i+1},v_i^2)} = M_i,$$ which is a contradiction, and hence $M_i > M_{i+1}$.

Whether $\{ u_i^1, v_i^2\} \in S_i$, the inequality between $M_i$ and $M_{i+1}$ is achieved by applying that $d_i(u_i^1,v_i^2) < d_i(u_i^1,u_{i+1})+d_{i+1}(u_{i+1},v_{i+1})+d_i(v_{i+1},v_i^2)$, which follows from the connection of $u_{i+1}$ and $v_{i+1}$ in $G_{i+1}$.
\end{enumerate}

Our next and final aim is to prove that $f$ is the only greyscale compatible with $g$ such that its gradation vector $$grad(G,f)=(M_1,\ldots,M_1,M_2,\ldots,M_2,\ldots,M_r,\ldots,M_r,0,\ldots,0)$$ is minimum among all gradation vectors of such greyscales. For this purpose, let $C_k$ be the vertex-set of $G$ containing the vertices that have been coloured at any of the first $k$ executions of the while-loop in Step~\ref{step:while-loop}, for $k=1\ldots r$, being $r$ the total number of executions of Step~\ref{step:while-loop}. Given a minimum gradation greyscale $h$ compatible with $g$ we prove by induction on $k$ that $h(w)=f(w)$ for all $w \in V$.

For $k=1$, every vertex of $C_1$ belongs to some $u-v$ geodesic $P_{u-v}=\{u=w_0,e_1,w_1,e_2,w_2, \dots, w_{l-1},e_l,v=w_l\}$ (alternating sequence of vertices and edges) such that $g(u)=f(u)=h(u)$ and $g(v)=f(v)=h(v)$. There does not exist an edge $e_i$ such that $\widehat{h}(e_i) > M_1$ due to the minimality of ${\mathcal{G}}_h$ and the existence of ${\mathcal{G}}_f$, and the next argument guarantees the non-existence of an edge $e_i$ such that $\widehat{h}(e_i) < M_1$, therefore $\widehat{h}(e_i)=\widehat{f}(e_i)=M_1$ for all edge of $P_{u-v}$. Both values of any greyscale on consecutive vertices of $P_{u-v}$ define intervals whose union is a cover of the interval $[g(u),g(v)]$ or $[g(v),g(u)]$. The lengths of these intervals, that is, the grey tones of the corresponding edges, are all $M_1$ for $f$, and so, also for the greyscale $h$. Thus, if there exists one of them less than $M_1$ there must exists another one greater than  $M_1$, but this fact is not possible owing to the minimality of ${\mathcal{G}}_h$ and the existence of ${\mathcal{G}}_f$.

Then, $f(w_i)=h(w_i)$ for all vertices of $P_{u-v}$ since $\widehat{f}(e_i)=\widehat{h}(e_i)=M_1$ for all edges of $P_{u-v}$ and $f(u)=h(u)=g(u)$ and $f(v)=h(v)=g(v)$.

For the induction step, the same previous reasoning is applied to the elements of the geodesics taking part in the execution $k+1$ of  Step~\ref{step:while-loop}, since the extreme vertices of such geodesics belong to $C_k$ and therefore their grey tones assigned by $f$ and $h$ are equal.
\end{proof}

The next result solves the \textsc{rmigg} problem in the case of only one type of extreme colour, either white or black, among the prefixed values.

\begin{thm}
\label{thm:0or1RMIGG}
Let $g$ be an incomplete $V_c$-greyscale of a connected graph $G(V,E)$ such that only one extreme tone, either $0$ or $1$, is reached by $g$. Then it is possible to obtain, in polynomial time, all the greyscales of $G$ compatible with $g$ whose gradation vector is the minimum one among all gradation vectors of greyscales compatible with $g$.
\end{thm}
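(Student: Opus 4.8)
The plan is to reduce this case to the fully prescribed situation already solved in Theorem~\ref{thm:01RMIGG}. Since $\widehat{f}$ is the same for a greyscale and its complementary one, gradation vectors are invariant under complementation, so without loss of generality I may assume that $g$ reaches $0$ but not $1$; the symmetric case in which $1$ is reached and $0$ is not follows by replacing $g$ with its complementary incomplete greyscale and complementing the output.

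The essential observation is that any greyscale $f$ compatible with $g$ must, by the very definition of greyscale, attain the value $1$ at some vertex $w$, and since $g$ does not reach $1$ this vertex necessarily lies in $V\setminus V_c$. Hence for each $w\in V\setminus V_c$ I would form the extended incomplete greyscale $g_w$ obtained from $g$ by additionally setting $g_w(w)=1$. This $g_w$ is an incomplete $(V_c\cup\{w\})$-greyscale that now reaches \emph{both} extreme tones, so Theorem~\ref{thm:01RMIGG} applies and produces, via the \textsc{$V_c$-Compatible-Complete-Mapping} algorithm in $O(n^4)$ time, the \emph{unique} minimum gradation greyscale $f_w$ compatible with $g_w$. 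Running this over all $w\in V\setminus V_c$ costs $O(n^5)$ time, hence it is polynomial.

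Next I would prove that the sought minimum gradation vector equals $M^{\ast}=\min_{w}{\mathcal{G}}_{f_w}$ (lexicographic minimum) and that the minimum gradation greyscales compatible with $g$ are exactly those $f_w$ attaining $M^{\ast}$. For feasibility, each $f_w$ is compatible with $g_w$ and therefore also with $g$, so $M^{\ast}$ is realized by a genuine candidate. For optimality and completeness, let $f$ be any minimum gradation greyscale compatible with $g$ and choose a vertex $w$ with $f(w)=1$; then $f$ is compatible with $g_w$, so on one hand ${\mathcal{G}}_{f_w}\le{\mathcal{G}}_f$ by the minimality of $f_w$ among greyscales compatible with $g_w$, and on the other hand ${\mathcal{G}}_f\le{\mathcal{G}}_{f_w}$ because $f$ is optimal over the larger feasible set of greyscales compatible with $g$, to which $f_w$ belongs. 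Thus ${\mathcal{G}}_f={\mathcal{G}}_{f_w}$, both $f$ and $f_w$ are minimum gradation greyscales compatible with $g_w$, and the uniqueness clause of Theorem~\ref{thm:01RMIGG} forces $f=f_w$. This shows every minimum gradation greyscale compatible with $g$ is one of the at most $n$ greyscales $f_w$, and conversely any $f_w$ with ${\mathcal{G}}_{f_w}=M^{\ast}$ is such a minimum, so the algorithm outputs them all.

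I expect the main obstacle to be the completeness half of this argument, namely the claim that enumerating the placements of the unprescribed tone $1$ over $V\setminus V_c$ loses no solution. This hinges precisely on the fact that a greyscale must realize $1$ together with the uniqueness guaranteed by Theorem~\ref{thm:01RMIGG}, which collapses an equality of gradation vectors into the identity $f=f_w$; without that uniqueness one could conclude only equality of gradation vectors, not of greyscales, and completeness would fail. A minor technical point to handle is that distinct choices of $w$ may yield the same greyscale $f_w$ (when $f_w$ realizes $1$ at several vertices), so the output list should be deduplicated, but this affects neither correctness nor the polynomial bound since there are at most $|V\setminus V_c|\le n$ candidates.
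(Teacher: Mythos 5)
Your proposal is correct and follows essentially the same route as the paper: reduce to the case where both extreme tones are prescribed by trying each $w\in V\setminus V_c$ as the location of the missing tone, invoke Theorem~\ref{thm:01RMIGG} for each resulting $g_w$, and select the lexicographically minimal gradation vectors among the at most $n$ candidates, for an $O(n^5)$ bound. In fact you spell out the optimality-and-completeness argument (including the role of the uniqueness clause of Theorem~\ref{thm:01RMIGG}) in more detail than the paper, which merely asserts that the minimal $f_w$ are the solutions.
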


\begin{proof}
Without loss of generality, we may suppose that $g$ reaches the white colour $0$ but not the black colour $1$. There exists a vertex $w \in V-V_c$ such that $f(w)=1$ for any greyscale $f$ compatible with $g$, in particular, any minimum gradation greyscale. Then, for every vertex $w \in V-V_c$ a new incomplete greyscale $g_w$ is defined such that $g_w(u)=g(u)$ whether  $u\in V_c$ and $g_w(w)=1$. In accordance with Theorem~\ref{thm:01RMIGG} for $g_w$, there exists only one greyscale $f_w$ compatible with $g_w$ whose gradation vector is minimum among all gradation vectors of greyscales compatible with $g_w$.

Among these $|V-V_c|$ greyscales $f_w$, those whose gradation vector is minimum are the solutions of the \textsc{rmigg} problem for only one extreme colour, and they have been obtained by running the \textsc{Compatible-Complete-Mapping} polynomial procedure  $|V-V_c|$ times. Hence $O(n^5)$ is achieved for this problem.
\end{proof}

Now, the following result resolves the \textsc{rmigg} problem in case of neither the black colour nor the white one appears among the prefixed values.

\begin{thm}
\label{thm:n0nor1RMIGG}
Let $g$ be an incomplete $V_c$-greyscale of a connected graph $G(V,E)$ such that no extreme tone is reached by $g$. Then it is possible to obtain, in polynomial time, a greyscale of $G$ compatible with $g$ whose gradation vector is the minimum one among all gradation vectors of greyscales compatible with $g$.
\end{thm}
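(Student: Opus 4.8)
The plan is to reduce this case to Theorem~\ref{thm:01RMIGG} by enumerating all possible placements of \emph{both} missing extreme tones, in the same spirit as the reduction used in Theorem~\ref{thm:0or1RMIGG} for a single missing extreme. Since $g$ reaches neither $0$ nor $1$, any greyscale $f$ compatible with $g$ must attain the values $0$ and $1$, and by compatibility these values can only be taken at vertices of $V-V_c$. Hence there exist distinct vertices $w_0,w_1\in V-V_c$ with $f(w_0)=0$ and $f(w_1)=1$. Accordingly, for every ordered pair of distinct vertices $w_0,w_1\in V-V_c$ I would define the incomplete $(V_c\cup\{w_0,w_1\})$-greyscale $g_{w_0,w_1}$ that agrees with $g$ on $V_c$ and sets $g_{w_0,w_1}(w_0)=0$, $g_{w_0,w_1}(w_1)=1$. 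This $g_{w_0,w_1}$ reaches both extreme tones, so Theorem~\ref{thm:01RMIGG} applies and yields, in polynomial time, the \emph{unique} greyscale $f_{w_0,w_1}$ compatible with $g_{w_0,w_1}$ whose gradation vector is minimum among all greyscales compatible with $g_{w_0,w_1}$. The algorithm then outputs, among the $f_{w_0,w_1}$, one whose gradation vector is lex-smallest.

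For correctness I would argue both inequalities. On one hand, each $f_{w_0,w_1}$ agrees with $g$ on $V_c$ and attains both $0$ and $1$, so it is a genuine greyscale compatible with $g$; therefore ${\mathcal{G}}_{f_{w_0,w_1}}$ is lexicographically at least the true restricted minimum, which I denote ${\mathcal{G}}^*$. On the other hand, let $f$ be any minimum gradation greyscale compatible with $g$, so ${\mathcal{G}}_f={\mathcal{G}}^*$, and choose $w_0,w_1\in V-V_c$ with $f(w_0)=0$, $f(w_1)=1$. Then $f$ is compatible with $g_{w_0,w_1}$. Crucially, compatibility with $g_{w_0,w_1}$ is strictly more restrictive than compatibility with $g$, so the set of greyscales compatible with $g_{w_0,w_1}$ is contained in the set of those compatible with $g$; consequently the minimum gradation vector over the smaller set is lexicographically at least ${\mathcal{G}}^*$. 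But $f$ belongs to that smaller set and already attains ${\mathcal{G}}^*$, so $f$ is a minimum gradation greyscale for the problem with input $g_{w_0,w_1}$. By the uniqueness in Theorem~\ref{thm:01RMIGG}, $f=f_{w_0,w_1}$. Thus ${\mathcal{G}}^*$ occurs among the vectors ${\mathcal{G}}_{f_{w_0,w_1}}$, and since all of them are $\geq{\mathcal{G}}^*$, the lex-smallest one equals ${\mathcal{G}}^*$ and is realized by a greyscale compatible with $g$, as required.

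For the running time, there are $O(n^2)$ ordered pairs $w_0,w_1\in V-V_c$, each invocation of the \textsc{$V_c$-Compatible-Complete-Mapping} algorithm costs $O(n^4)$ by Theorem~\ref{thm:01RMIGG}, and comparing the resulting $O(n^2)$ gradation vectors (each of length $m\le\binom{n}{2}$) is negligible, giving an overall $O(n^6)$ bound, which is polynomial. I would also remark that one may restrict to unordered pairs up to complementary greyscales, since swapping the roles of $w_0$ and $w_1$ produces the complementary problem and hence a complementary greyscale with the same induced $\widehat{f}$, but this only changes the constant factor. Finally, although the statement asks for a single minimum gradation greyscale, this scheme in fact produces all of them simultaneously: collecting every $f_{w_0,w_1}$ whose gradation vector equals ${\mathcal{G}}^*$ yields exactly the family of minimum gradation greyscales compatible with $g$ (modulo complementarity).

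The step I expect to be the main obstacle — really the only nontrivial point — is the monotonicity-plus-uniqueness argument at the heart of the second paragraph: one must observe that restricting the feasible set by fixing $w_0\mapsto 0$ and $w_1\mapsto 1$ cannot lower the achievable minimum below ${\mathcal{G}}^*$, yet the global optimum $f$ survives inside this restricted set, so by the uniqueness already established it must coincide with the algorithm's output $f_{w_0,w_1}$. Everything else is bookkeeping over the $O(n^2)$ placements and a direct appeal to Theorem~\ref{thm:01RMIGG}.
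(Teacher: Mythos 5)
Your proposal is correct and follows essentially the same route as the paper: enumerate all pairs of uncoloured vertices as candidate locations for the tones $0$ and $1$, solve each resulting instance via Theorem~\ref{thm:01RMIGG}, and keep the lexicographically smallest result, for an overall $O(n^6)$ bound. Your correctness argument (monotonicity of the restricted minimum plus the uniqueness from Theorem~\ref{thm:01RMIGG}) is in fact spelled out more carefully than in the paper, which merely refers to the proof of Theorem~\ref{thm:0or1RMIGG}.
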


\begin{proof}
This proof is similar to the proof of Theorem~\ref{thm:0or1RMIGG}, but in this case a new incomplete $V_c \cup \{w_1,w_2\}$-greyscale is defined for every pair of vertices $w_1$ and $w_2$ of $|V-V_c|$ such that $g_{\{w_1,w_2\}}(u)=g(u)$ whether  $u\in V_c$, $g_{\{w_1,w_2\}}(w_1)=0$ and $g_{\{w_1,w_2\}}(w_2)=1$.
Since there are $\binom{|V-V_c|}{2}$ of these incomplete greyscales and the \textsc{Compatible-Complete-Mapping} polynomial procedure provides only one greyscale for each one of them, this problem can be solved in $O(n^6)$ time.
\end{proof}

Finally, the following and last theorem solves the {\sc migg} problem.

\begin{thm}
\label{thm:main}
The {\sc migg} problem can be solved in polynomial time, that is, the minimum gradation vector and all their minimum gradation greyscales are obtained in polynomial time.
\end{thm}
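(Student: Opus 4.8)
The plan is to reduce the unrestricted \textsc{migg} problem to the restricted case already solved in Theorem~\ref{thm:01RMIGG}, using the structural results proved earlier in the section. The crucial observation is Corollary~\ref{cor:antizo}: in any minimum gradation greyscale, the vertices coloured $0$ and $1$ must be antipodal, that is, they realize the diameter $d(G)$. This drastically restricts where the extreme tones can be placed. So first I would let $\{u,v\}$ range over all \emph{antipodal pairs} of $G$ (pairs with $d(u,v)=d(G)$), and for each such pair define the incomplete $V_c$-greyscale $g_{\{u,v\}}$ on $V_c=\{u,v\}$ by $g_{\{u,v\}}(u)=0$ and $g_{\{u,v\}}(v)=1$. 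Each such $g_{\{u,v\}}$ has both extreme tones prefixed, so Theorem~\ref{thm:01RMIGG} applies directly and yields, in polynomial time, the unique compatible greyscale $f_{\{u,v\}}$ of minimum gradation among all greyscales compatible with $g_{\{u,v\}}$.

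The second step is to argue that the global minimum gradation greyscale is found among this finite family. By Corollary~\ref{cor:antizo}, any minimum gradation greyscale $f$ of $G$ assigns $0$ and $1$ to some antipodal pair $\{u,v\}$; hence $f$ is compatible with the corresponding $g_{\{u,v\}}$, and its gradation vector is at least that of $f_{\{u,v\}}$. Consequently the minimum gradation vector of $G$ equals $\min_{\{u,v\}}\,grad(G,f_{\{u,v\}})$ taken over antipodal pairs, following the lexicographical order. I would therefore compute $d(G)$ (one all-pairs shortest-path computation, $O(n^3)$), enumerate the antipodal pairs, run the Theorem~\ref{thm:01RMIGG} subroutine on each, and select those achieving the lexicographically smallest gradation vector. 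Since there are $O(n^2)$ antipodal pairs and each subroutine call costs $O(n^4)$, the overall running time is $O(n^6)$, which is polynomial.

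The final step concerns completeness of the output, namely producing \emph{all} minimum gradation greyscales, not just one. Here one must be careful: Theorem~\ref{thm:01RMIGG} returns a \emph{unique} greyscale per antipodal pair, so the collection $\{f_{\{u,v\}}\}$ already enumerates every candidate, and the solution set is exactly those $f_{\{u,v\}}$ whose gradation vector attains the global minimum. The complementary-greyscale symmetry (assigning $0$ and $1$ in the reversed roles) is absorbed by letting $\{u,v\}$ be an unordered pair, consistent with the convention stated earlier that solutions are determined up to complementary greyscales.

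The main obstacle I anticipate is not the complexity bound but the justification that no minimum gradation greyscale is missed. This rests entirely on Corollary~\ref{cor:antizo}, which forces the extreme tones onto an antipodal pair; without it one would a priori have to consider placing $0$ and $1$ on \emph{every} pair of vertices, as in Theorem~\ref{thm:n0nor1RMIGG}, still polynomial but less sharp. The delicate point to state carefully is that minimizing over the restricted problems indexed by antipodal pairs genuinely coincides with the unrestricted minimum, and that each restricted minimum is realized by a greyscale in which $u,v$ actually receive $0,1$ --- which is guaranteed because those values are prefixed in $g_{\{u,v\}}$.
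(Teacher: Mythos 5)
Your proposal is correct and follows essentially the same route as the paper: reduce \textsc{migg} to the restricted problem of Theorem~\ref{thm:01RMIGG} by enumerating candidate pairs for the extreme tones and selecting the lexicographically smallest resulting gradation vectors. The only difference is that you restrict the enumeration to antipodal pairs via Corollary~\ref{cor:antizo} rather than all $\binom{|V|}{2}$ pairs, which is precisely the speedup the paper itself records in the remark immediately following its proof, and your justification that no minimum gradation greyscale is thereby missed is sound.
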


\begin{proof}
All possible greyscales can be considered taking all possible pairs of vertices to be coloured with the extreme tones, that is, $\binom{|V|}{2}$ incomplete greyscales. Theorem~\ref{thm:01RMIGG} is applied to each of these incomplete greyscales and hence, the best greyscales in the sense of gradation have to be selected among a set of $\binom{|V|}{2}$ greyscales, each of them computed in polynomial time. Thus, the minimum gradation vector of the given graph, as well as all their minimum gradation greyscales are achieved in polynomial time.
\end{proof}

In accordance with Corollary~\ref{cor:antizo} it is possible to reduce the actual time of the \textsc{$V_c$-Compatible-Complete-Mapping} procedure applied to the {\sc migg} problem ($V_c=\emptyset$) only taking into account the pairs of antipodal vertices to be coloured with the extreme tones, instead of all the pairs of vertices of the graph. By other hand, the following observation also reduces the actual time of the algorithms that solve  the {\sc migg} problem and the \textsc{rmigg} problems when both extreme tones are not reached by the incomplete greyscale. The \textsc{$V_c$-Compatible-Complete-Mapping} procedure has to be applied a quadratic number of times in the worst case, once for each pair of vertices coloured with black and white. These executions can be performed in parallel and since we are dealing with minimax problems, after each iteration of the while-loop, it suffices to continue with the executions that lead to the minimum value for $M_i$ and, moreover, appearing the minimum number of times in the gradation vector. The rest of these executions of the \textsc{$V_c$-Compatible-Complete-Mapping} procedure can be discarded.

Finally, it is easy to check the following result.

\begin{cor}
At least, the first $d(G)$ components of the minimum gradation vector of the \textsc{migg} problem for a graph $G$ with diameter $d(G)$ are $\frac{1}{d(G)}$.
\end{cor}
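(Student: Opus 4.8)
The plan is to exhibit $d(G)$ edges that are all coloured $\frac{1}{d(G)}$ by a minimum gradation greyscale and to check that $\frac{1}{d(G)}$ is simultaneously the largest edge-colour, so that these values occupy the first $d(G)$ positions of the gradation vector. First I would take any minimum gradation greyscale $f$ and pick $u,v\in V$ with $f(u)=0$ and $f(v)=1$, which exist by the definition of greyscale. Corollary~\ref{cor:antizo} guarantees that $u$ and $v$ are antipodal, so $d(u,v)=d(G)$ and hence $F(u,v)=\frac{1}{d(G)}$.

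The core of the argument is to sandwich $\max_{e\in E}\widehat{f}(e)$ between two copies of $\frac{1}{d(G)}$. For the upper bound I would reuse the reasoning already employed in the proof of Corollary~\ref{cor:antizo}: by Lemma~\ref{lemma:sggorro} the support greyscale $f\langle u,v\rangle$ has a gradation vector whose largest component is $\frac{1}{d(G)}$, and since $f$ is a minimum gradation greyscale its gradation vector is lexicographically no larger, which forces its leading entry $\max_{e\in E}\widehat{f}(e)$ to be at most $\frac{1}{d(G)}$. For the lower bound I would fix any $u$-$v$ geodesic $P_{u-v}$, which has exactly $d(G)$ edges, and apply Corollary~\ref{cor:lowerb} to obtain $\frac{1}{d(G)}=F(u,v)\leq\max_{e\in P_{u-v}}\widehat{f}(e)\leq\max_{e\in E}\widehat{f}(e)\leq\frac{1}{d(G)}$. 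All these inequalities are thus equalities; in particular $F(u,v)=\max_{e\in P_{u-v}}\widehat{f}(e)$, so Lemma~\ref{lemma:eqFfhat} yields $\widehat{f}(e)=F(u,v)=\frac{1}{d(G)}$ for every edge $e$ of $P_{u-v}$.

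To conclude, since $\frac{1}{d(G)}$ equals $\max_{e\in E}\widehat{f}(e)$ and $P_{u-v}$ contributes $d(G)$ edges of exactly this maximal colour, the first $d(G)$ components of the gradation vector $\mathcal{G}_f$ are all equal to $\frac{1}{d(G)}$. This mirrors the proof of Corollary~\ref{cor:miggpaths} for paths, of which the present statement is the diameter-geodesic generalisation. The only genuinely delicate step is the upper bound, where antipodality of $u$ and $v$ (via Corollary~\ref{cor:antizo}) is what makes a support greyscale available to cap the maximum edge-colour; everything else is a direct assembly of Corollary~\ref{cor:lowerb} with Lemmas~\ref{lemma:sggorro} and~\ref{lemma:eqFfhat}.
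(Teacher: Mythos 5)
Your proof is correct: the paper leaves this corollary unproved (``it is easy to check''), and your argument assembles exactly the ingredients the paper clearly intends --- Corollary~\ref{cor:antizo} for antipodality of the white and black vertices, Lemma~\ref{lemma:sggorro} for the upper bound $\max_{e\in E}\widehat{f}(e)\leq\frac{1}{d(G)}$, and Corollary~\ref{cor:lowerb} together with Lemma~\ref{lemma:eqFfhat} to force all $d(G)$ edges of a $u$-$v$ geodesic to have colour $\frac{1}{d(G)}$ --- mirroring the paper's own proof of Corollary~\ref{cor:miggpaths}. Nothing to correct.
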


\section{Open problems}
\label{sec:openp}

The new concept of gradation of a graph, related to vertex and edge colourings, has been introduced, and polynomial algorithms have been designed to solve the problem of determining the minimum gradation vector depending on whether or not there exist prefixed colours. Nevertheless, the algorithms developed in this paper have high time complexity so that the main open problem that immediately raises by our work is to improve the computational time required to solve gradation problems. Our time complexities are determined by the computation of the distance matrix of the graph so different resolution techniques would have to be investigated in order to reduce the computational times.

It would be also interesting to pose gradation in digraphs, studying the more suitable way of assigning colours to the directed edges.

\vspace*{1cm}
\noindent{\bf Acknowledgements}

\vspace*{0.5cm}
The authors gratefully acknowledge financial support by the Spanish Mi\-nis\-te\-rio de Economía, Industria y Competitividad and Junta de Andaluc\'ia via grants,  MTM2015-65397-P (M.T. Villar-Li\~n\'an) and  PAI FQM-164, respectively.

\end{document}